%

\documentclass[preprint, noinfoline,addressatend,imslayout,a4paper]{imsart}

\usepackage[authoryear]{natbib}
\usepackage{hyperref}


\usepackage{amsthm,amsmath,amsfonts,amssymb}
\usepackage{enumerate}

\usepackage{bm,accents}
\usepackage{graphicx}

\startlocaldefs

\newtheorem{thm}{Theorem}
\newtheorem{cor}[thm]{Corollary}
\newtheorem{lem}[thm]{Lemma}




\setattribute{tablecaption}{shape}{}

\setattribute{tablename} {skip}{.~}

\newcommand{\indi}[1]{\bm{1}{\left\{#1\right\}}}

\newcommand{\R}{\mathbb{R}}
\newcommand{\eps}{\varepsilon}

\newcommand{\argmin}{\operatornamewithlimits{argmin}}
\newcommand{\asarrow}[0]{\stackrel{\mbox{\scriptsize\rm a.s.}}
{\rightarrow}}

\newcommand{\darrow}[0]{\rightsquigarrow}

\newcommand{\F}[0]{\mathcal{F}}

\newcommand{\Z}[0]{\mathcal{Z}}

\newcommand{\sn}[0]{\sqrt{n}}

\newcommand{\Ghat}[0]{\hat{G}}

\newcommand{\ep}{\mathbb{P}}

\newcommand{\hatG}{\hat{G}_n}

\newcommand{\Gtau}{ {G_{\tau}} }
\newcommand{\Stau}{ {S_{\tau}} }
\newcommand{\Htau}{ {H_{\tau}} }
\newcommand{\Gtauh}{ {\hat G_{\tau}} }
\newcommand{\Stauh}{ {\hat S_{\tau}} }
\newcommand{\Htauh}{ {\hat H_{\tau}} }

\addtolength{\oddsidemargin}{-.75in}%
\addtolength{\evensidemargin}{-.75in}%
\addtolength{\textwidth}{1.5in}%
\addtolength{\textheight}{1.3in}%
\addtolength{\topmargin}{-.8in}%
\bibpunct{(}{)}{;}{a}{,}{,}
\def\spacingset#1{\renewcommand{\baselinestretch}%
{#1}\small\normalsize} \spacingset{1.6}

\begin{document}

\begin{frontmatter}


\title{Hoeffding-Type and Bernstein-Type Inequalities for~Right~Censored~Data} 
\runtitle{Concentration Inequalities for Right Censored Data}
\author{\fnms{Yair} \snm{Goldberg}\ead[label=e1]{yairgo@technion.ac.il}}
%
%
\affiliation{The Faculty of Industrial Engineering and Management,\\ Technion - Israel Institute of Technology}
%
%
\runauthor{Goldberg}
\begin{abstract}
We present Hoeffding-type and Bernstein-type inequalities for right-censored data. The inequalities bound the difference between an inverse of the probability of censoring weighting (IPCW) estimator and its expectation. We first discuss the asymptotic properties of the estimator and provide conditions for its efficiency. We present standard, data dependent, and uniform Hoeffding-type inequalities. We then present a Bernstein-type inequality. Finally, we show how to apply these inequalities in an empirical risk minimization setting.
\end{abstract}
%
%
\end{frontmatter}

\section{Introduction}\label{sec:intro}
%
%
%
%
%

%

Concentration inequalities provide probability bounds on how an empirical quantity of interest deviates from its expectation. Unlike asymptotic results, such as the law of large numbers and the central limit theorem, concentration inequalities are applicable even to small-size samples. Typically, the empirical quantity of interest is the sample mean, where this mean is taken over realizations of some function. Examples for concentration inequalities include Chebyshev's inequality, Hoeffding's inequality, and Bernstein's inequality (see \citealp{Boucheron2004Concentration}, and \citealp{Chung06}, for surveys). In many cases, one is interested in a uniform concentration inequality, namely a bound on the deviation of the sample mean from its expectation over a set of functions rather than over a single function.

Concentration inequalities play an important role in both the design and analysis of empirical risk minimization (ERM) techniques \citep{Vapnik99,Koltchinskii2011}. ERM techniques are statistical techniques which find the parameter of interest by minimizing some (possibly penalized) risk function with respect to the empirical measure. Examples of ERM techniques include the maximum likelihood estimation as well as many machine-learning algorithms such as boosting and support vector machines (SVM). Using uniform concentration inequalities, one can bound the estimation error, i.e., the difference in risk between the empirical risk minimizer and the infimum within the approximation space. Moreover, one can use such bounds to choose the approximation space itself; a method that is utilized, for example, by model selection techniques \citep{Vapnik99,Koltchinskii2011}.

Applying concentration inequalities to survival data is challenging. In survival data, the quantity of interest is a function that involves the failure time. This failure time is usually subject to right censoring, which arises, for example, when a medical study ends before the failure event occurs, or when patients drop out of the study \citep{FH}. In these cases, the failure time is not known, and instead a lower bound on the failure time is given. Existing concentration inequalities cannot be applied to censored data since even the sample mean cannot be calculated due to the censoring. As a consequence, little of machine learning theory is applicable to survival data. Indeed, while some machine learning algorithms were proposed for right-censored data, including \citet{NN98}, \citet{Ripley}, \citet{Johnson04}, \citet{SVR07}, and \citet{SVQR09}, to the best of our knowledge, the theoretical properties of these algorithms have never been studied. Machine learning algorithms with some theoretical results include \citet{Ishwaran2010} for random survival trees, which requires the assumption that the feature space is discrete and finite; \citet{Eleurti_2012} and \citet{GK_SVR} in the context of SVM; and \citet{GK_CQL11} in the context of multistage decision problems.

Since concentration inequalities bound the probability that an estimator, such as the sample mean, is far from its expectation, we need to consider which estimator is appropriate. The standard estimator is not well defined since for censored data, the actual failure time is unknown. Moreover, the naive estimator, that ignores the censored observations, is biased, since longer survival times are more likely to be censored. An alternative to the naive estimator is to take a weighted average of the uncensored observations, according to the inverse of the probability of censoring weighting (IPCW) \citep{Robins94}. We discuss the asymptotic properties of this estimator and provide conditions for its efficiency.

In this paper we develop
concentration inequalities that can be applied to right-censored data, and show how to use them to develop learning algorithms for this type of data. More specifically, we develop Hoeffding-type inequalities that bound the difference between the IPCW estimator and its expectation. These inequalities include inequalities for a single function, data-dependent inequalities \citep[see][for details]{Maurer2009EmpiricalBernstein}, and inequalities for classes of functions. We then present a Bernstein-type inequality. The concentration inequalities that we present here can be used as tools to further develop machine learning theory for right-censored data.

The paper is organized as follows. In
Section~\ref{sec:IPCW} we discuss the IPCW estimator and study its asymptotic properties. The concentration inequalities are presented in Section~\ref{sec:single}. Application of the inequalities to empirical risk minimization is presented in Section~\ref{sec:functions}. Concluding
remarks appear in Section~\ref{sec:summary}. All proofs are deferred to the  Appendix.

\section{Inverse Probability Censoring Weighting Estimator}\label{sec:IPCW}
We start by presenting the standard Hoeffding and Bernstein inequalities. We then explain why no trivial adaptation of these  inequalities to right-censored data exists.

Let $\{(T_1,Z_1),\ldots, (T_n,Z_n)\}$ be a set of $n$ i.i.d.\ random pairs, where $T_i$ gets values in the segment $[0,\tau]$ for some constant $\tau$ greater than zero, and $Z_i$ is some random vector of dimension $d$ that gets it values in $\Z\subset\R^d$. Let
\begin{align}\label{eq:f}
f:[0,\tau]\times\Z\mapsto \R\,;\quad \|f\|_{\infty}\leq M
\end{align}
be a measurable function where $M>0$ is some constant. Let $\bar{\mu}_n(f)=n^{-1}\sum_{i=1}^n f(T_i,Z_i)$ be the sample mean of $f(T,Z)$ and let $\mu(f)\equiv P[f(T,Z)]$ be the expectation of $f(T,Z)$. Hoeffding's inequality states that for all $\eta>0$,
\begin{align}\label{eq:Hoeffding}
P\left( |\bar{\mu}_n(f)-\mu(f)|\geq M\sqrt{\frac{2\eta}{n}}\right)\leq 2e^{-\eta}\,.
\end{align}
Bernstein's inequality states that for all $\eta>0$,
\begin{align}\label{eq:Bernstein}
P\left( |\bar{\mu}_n(f)-\mu(f)|\geq \sqrt{\frac{2\mathrm{Var}(f)\eta}{n}}+\frac{2M\eta}{3n}\right)\leq 2e^{-\eta}
\end{align}
\citep[see, for example,][]{Boucheron2004Concentration}.

The inequalities above provide bounds on the probability that the difference between the natural estimator of the expectation of $f(T,Z)$ and the expectation itself is larger than some constant normalized by root-$n$.

Assume now that the data is subject to right censoring, and thus consist of $n$ independent and
identically-distributed random triplets $\{(U_1,\delta_1,Z_1),\allowbreak \ldots,(U_n,\delta_n,Z_n)\}$. The random variable $U$ is the observed time defined by $U=\min\{T,C\}$; where $T$ is the failure time, and $C$ is the censoring time. The indicator $\delta=\indi{T\leq C}$ is the failure indicator, where $\indi{A}$ is $1$ if $A$ is true and is $0$ otherwise, i.e., $\delta=1$ whenever a failure is observed.
Let $S(t)=P(T>t)$ be the survival function of $T$,
$G(t)=P(C> t)$ be the survival function of $C$, and $H(t)=P(U>t)$ be the survival function of $U$.

As discussed in Section~\ref{sec:intro}, estimating the expectation of right-censored data is challenging. The standard estimator is not applicable and the naive estimator that ignores the censored observations \begin{align*}
\frac{1}{\# \{\delta_i=1\}}\sum_{i=1}^n \delta_i  f(T_i,Z_i)
\end{align*}
is biased, since longer survival times are more likely to be censored. An alternative to the biased estimator above is to take a weighted average of the uncensored observations, according to the inverse of the probability of censoring weighting (IPCW) \citep{Robins94}.

Let $\hatG (t)$ be the Kaplan-Meier estimator for $G$. Recall that $\hatG $ is a consistent and
efficient estimator for the survival function $G$
\citep[][Chapter~4.3]{Kosorok08}. An estimator of the expectation of
$f(T,Z)$ based on the inverse probability censoring weighting (IPCW)
\citep{Robins94} is given by
\begin{align}\label{eq:estimator}
\hat{\mu}_n(f) \equiv \ep_n \frac{\delta f(T,Z)}{    \hatG(T-)}\equiv \frac1n \sum_{i=1}^{n}\frac{\delta_i f(T_i,Z_i)}{    \hatG(T_i-)}\,,
\end{align}
where $\ep_n$ is the empirical measure, and $\hatG(t_0-)=\lim_{t\rightarrow t_0, t<t_0}\hatG(t)$.

We would like to show that estimator~\eqref{eq:estimator} is a ``good" estimator. We first prove that this estimator is asymptotically unbiased, and weakly converges to a normal random variable. Moreover, when $f$ is a function only of  $T$,  we show that it is an efficient estimator. In order to show the last result we first prove that the estimator~\eqref{eq:estimator} has a representation as a Kaplan-Meier functional~\citep{Schick1988}. While the fact that the Kaplan-Meier estimator itself has a representation as an IPCW estimator \citep{zhao_consistent_1997,Satten01} is known\footnote{I would like to thank A.A. Tsiatis for pointing out this identity and some of its implications.}, our result generalizes it to a wider class of estimators.

We need the following assumptions:
\begin{enumerate}
	\renewcommand{\labelenumi}{(A\arabic{enumi})}
	\item The pair $(T,Z)$ is independent of the censoring time $C$.\label{as:independentCT}
	
	\item $T$ takes values in the segment $[0,\tau]$ for some finite $\tau>0$, and $H(\tau-)>0$.\label{as:positiveRisk}
	
	\item The probability that $T=C $ is zero. \label{as:no_simultanious_failure}
	
\end{enumerate}
The first assumption states that the censoring arises completely at random. This assumption is reasonable, for example, when the censoring is administrative. The second assumption assures that there is a positive probability of observing failure time up to time~$\tau$. Note that the existence of such a~$\tau$ is typical since most studies have a finite time period of observation. Moreover, if some random variable~$T^*$ gets values greater than $\tau$, one can always look at $T=\min\{T^*,\tau\}$ \citep[see, for example,][Section~2, for discussion]{zhao_consistent_1997}. Note that having one observation at time $\tau$ verifies Assumption~(A\ref{as:positiveRisk}). Assumption~(A\ref{as:no_simultanious_failure}) states that the probability of simultaneous failure and censoring event is zero. This assumption holds, for example, if either $T$ or $C$ are absolutely continuous on the segment $[0,\tau)$. Even when the distributions of both $T$ and $C$ have finite number of atoms, the assumption holds for $C+\varepsilon$ of some small $\varepsilon>0$.

Denote $\Stau\equiv S(\tau-) =P(T\geq \tau)$, $\Gtau\equiv G(\tau-)=P(C\geq \tau)$,  and $\Htau \equiv H(\tau-)=P( U\geq \tau)$. Note that by Assumption (A\ref{as:independentCT}), $S_{\tau} \cdot G_{\tau}= H_{\tau}>0$.  
Denote by $\Stauh$ and $\Gtauh$ the Kaplan-Meier estimators of $\Stau$ and $\Gtau$; and by $\Htauh$ is the estimator of $\Htau$ based on the cumulative distribution estimator of $U$.

\begin{thm}\label{thm:asymptotic}
	Assume (A\ref{as:independentCT})--(A\ref{as:no_simultanious_failure}) and let $P[f(T,Z)^2]<\infty$. Then
	\begin{enumerate}
		\item $\displaystyle 
		\hat{\mu}_n \asarrow \mu(f) $.
		
		\item $\displaystyle 
		\sn\left(\hat{\mu}_n(f)- \mu(f) \right)~\darrow N\left(0,\sigma_f^2\right)\,,$
		where 
		\begin{align}\label{eq:sigma_f}
		\sigma_f^2\equiv   \mathrm{Var}(f(T,Z))+P\left[\int_0^\tau\left(f(s,Z)-\frac{P\left(f(T,Z) \indi{T\geq s}\right)}{S(s-)}\right)^2\indi{T\geq s}\frac{d\Lambda^C(s)}{G(s)}\right]\,,
		\end{align}
		and where $\Lambda^C$ is the cumulative hazard function of the censoring variable $C$.
		
		\item When $f $ is a function only of $T$,
		\begin{align*}
		\hat{\mu}_n(f) = \int_0^\tau f(t) d\hat F_n(t)\,,
		\end{align*}
		where $\hat F_n$  is the Kaplan-Meier estimator of $F\equiv 1-S$. Moreover, $\hat{\mu}_n(f)$ is an efficient estimator for $\mu(f)$.
		
	\end{enumerate}

\end{thm}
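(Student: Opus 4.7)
\textbf{Proof proposal for Theorem~\ref{thm:asymptotic}.}

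For part (1), I split $\hat{\mu}_n(f) = \tilde{\mu}_n(f) + [\hat{\mu}_n(f) - \tilde{\mu}_n(f)]$, where $\tilde{\mu}_n(f) \equiv n^{-1}\sum_{i=1}^n \delta_i f(T_i, Z_i)/G(T_i-)$ is the ``oracle'' IPCW built with the true $G$. Using (A\ref{as:independentCT}) and (A\ref{as:no_simultanious_failure}), conditioning on $(T, Z)$ gives $E[\delta \mid T, Z] = P(C \geq T \mid T) = G(T-)$, so the summands of $\tilde{\mu}_n(f)$ are i.i.d.\ with mean $\mu(f)$ and are uniformly bounded by $M/\Gtau$; the strong law yields $\tilde{\mu}_n(f) \asarrow \mu(f)$. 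The plug-in error is controlled by $(M/\Gtau)\|\hatG - G\|_{\infty, [0, \tau]}/\hatG(\tau-)$, which vanishes almost surely by the uniform consistency of the Kaplan-Meier estimator on $[0, \tau]$, available thanks to (A\ref{as:positiveRisk}).

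For part (2) I use the same split, scaled by $\sqrt{n}$. The oracle term is a bounded i.i.d.\ sum to which the CLT applies. For the plug-in term, a first-order expansion of $1/\hatG - 1/G$ together with the uniform consistency of $\hatG$ gives
\begin{align*}
\sqrt{n}(\hat{\mu}_n(f) - \tilde{\mu}_n(f)) = -\ep_n\!\left[\frac{\delta f(T, Z)\,\sqrt{n}(\hatG(T-) - G(T-))}{G(T-)^2}\right] + o_P(1),
\end{align*}
into which I substitute the classical martingale representation of the Kaplan-Meier estimator,
\begin{align*}
\sqrt{n}\,\frac{\hatG(t) - G(t)}{G(t)} = -\frac{1}{\sqrt{n}}\sum_{i=1}^n \int_0^t \frac{dM^C_i(s)}{H(s-)} + o_P(1),
\end{align*}
with $M^C_i(s) = N^C_i(s) - \int_0^s Y_i(u)\,d\Lambda^C(u)$, $N^C_i(s) = \indi{U_i \leq s,\,\delta_i = 0}$ and $Y_i(s) = \indi{U_i \geq s}$. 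Exchanging the order of the outer empirical average and the Stieltjes integral, and replacing $\ep_n$ by its limit via a standard Glivenko-Cantelli argument, produces the i.i.d.\ influence-function expansion
\begin{align*}
\sqrt{n}(\hat{\mu}_n(f) - \mu(f)) = \frac{1}{\sqrt{n}}\sum_{i=1}^n \left\{\frac{\delta_i f(T_i, Z_i)}{G(T_i-)} - \mu(f) + \int_0^\tau \frac{P[f(T, Z)\indi{T \geq s}]}{H(s-)}\, dM^C_i(s)\right\} + o_P(1),
\end{align*}
and the CLT delivers asymptotic normality.

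For part (3), when $f$ depends only on $T$, I invoke the Satten-Datta/Efron representation of the Kaplan-Meier distribution function: the mass that $\hat{F}_n$ places on a failure time $U_i$ (with $\delta_i = 1$) is exactly $\delta_i/[n\hatG(U_i-)]$, so $\int_0^\tau f(t)\,d\hat{F}_n(t) = \sum_{i=1}^n \delta_i f(U_i)/[n\hatG(U_i-)] = \hat{\mu}_n(f)$. Efficiency then follows from the classical efficiency of $\hat{F}_n$ on $[0, \tau]$ (\citealp{Kosorok08}, Chapter~4.3) combined with Hadamard differentiability of the linear functional $F \mapsto \int_0^\tau f\,dF$ and the efficient delta method.

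The main obstacle is the variance identification in part (2): matching the variance of the influence function displayed above to the compact form~\eqref{eq:sigma_f} requires careful bookkeeping of the predictable variation of the martingale integral, the cross term between the oracle summand and that integral, and the identity $H(s-) = S(s-)G(s-)$ implied by (A\ref{as:independentCT}). Only after collecting these three contributions and reorganizing them does $\sigma_f^2$ collapse to the single integral featuring the residual $f(s, Z) - P[f(T, Z)\indi{T \geq s}]/S(s-)$.
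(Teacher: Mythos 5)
Your parts (1) and (3) follow essentially the paper's own arguments and are fine: for (1) the paper uses the same split around the true $G$ (its $A_n,B_n,C_n$ decomposition) with uniform consistency of the Kaplan--Meier estimator and (A2) keeping $1/\hatG(\tau-)$ under control, and for (3) the paper derives the mass identity $\Delta\hat F_n(t^*_j)=n^*_j/(n\hatG(t^*_j))$ from the product-limit form together with $n^{-1}Y(t)=\hatG(t-)\hat S(t-)$ (the same fact you import from Satten--Datta), and then gets efficiency exactly as you do, via efficiency of $\hat F_n$, Hadamard differentiability of $F\mapsto\int f\,dF$, and the delta method.

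In part (2), however, there is a genuine gap: the theorem asserts the specific variance \eqref{eq:sigma_f}, and you never establish it. Your influence-function expansion (oracle term $\delta f/G(T-)-\mu(f)$ plus the correction $\int_0^\tau P[f(T,Z)\indi{T\geq s}]H(s-)^{-1}dM^C_i(s)$) is the right object, but its two pieces are \emph{correlated} — the oracle summand itself contains a martingale component — so computing its variance requires an explicit cross-covariance calculation that you only gesture at in your closing paragraph ("careful bookkeeping"). The paper's proof is organized precisely to avoid this: using the identity $\delta_i/G(T_i-)=1-\int dM_i^C(s)/G(s)$ (its Eq.~\eqref{eq:3.10.d}) it re-centers the expansion into a $\mathcal{G}(0)$-predictable i.i.d.\ part $n^{-1/2}\sum_i(f(T_i,Z_i)-\mu(f))$ and a martingale part with the centered integrand $f(s,Z_i)-P[f(T,Z)\indi{T\geq s}]/S(s-)$; these are uncorrelated by construction, so the asymptotic variance is $\mathrm{Var}(f(T,Z))$ plus the predictable variation of the martingale term (Theorem~2.6.2 of Fleming--Harrington), which is exactly the second summand of \eqref{eq:sigma_f} after conditioning on $C$ and using $G(s)=G(s-)(1-\Delta\Lambda^C(s))$. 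To complete your route you would either have to carry out the cross-term computation and show the total collapses to \eqref{eq:sigma_f}, or adopt this re-grouping. A second, more minor, point: your Kaplan--Meier representation with $1/H(s-)$ inside the integral is only the continuous-$G$ simplification of Gill's identity $(\hatG(t)-G(t))/G(t)=-\int_0^t\hatG(u-)\,dM^C(u)/(G(u)Y(u))$; for distributions with atoms the factor $G(u-)/G(u)$ must be tracked, which is where the $(1-\Delta\Lambda^C)$ correction in the paper's variance calculation comes from.
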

See proof in Appendix~\ref{subsec:asymptotic}


\section{Concentration Inequalities}\label{sec:single}
In the previous section we showed that the IPCW estimator is asymptotically normal. In this section we would like to present non-asymptotical bounds on the distance of the estimator from its expectation. In Sections~\ref{sec:simple},~\ref{sec:empirical}, and~\ref{sec:uniform}, we discuss simple, empirical, and uniform Hoeffding-type inequalities, respectively. In Section~\ref{sec:Bernstein}, we use these inequalities to prove a Bernstein-type inequality. A discussion appears in Section~\ref{subsec:discussion}.

\subsection{Simple Hoeffding-Type Inequality}\label{sec:simple}
We start with the following Hoeffeding-type inequality for right-censored observations:
\begin{thm}\label{thm:hoeffding}
	Assume (A\ref{as:independentCT})--(A\ref{as:no_simultanious_failure}) and that $\|f\|_{\infty}\leq  M$. Then for any $n\geq 1$ and $\eta>0$ we have
	\begin{align*}
	P\left( \frac{\sn  \Htau\Gtauh}{M}\left|\hat{\mu}_n(f)-\mu(f) \right|\geq 3\sqrt{\frac{\eta}{2}} +\frac{D_o}{2}\right)\leq \frac{9}{2}e^{-\eta}\,,
	\end{align*}
	where $D_o$ is some universal constant.
\end{thm}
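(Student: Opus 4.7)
The natural plan is to split the IPCW deviation into an \emph{oracle} piece (using the true $G$) and a \emph{plug-in} piece (the extra error from replacing $G$ by $\hat G$), and to concentrate each separately. Define
$$\tilde\mu_n(f)\equiv\frac1n\sum_{i=1}^{n}\frac{\delta_i f(T_i,Z_i)}{G(T_i-)}.$$
By (A\ref{as:independentCT}) and (A\ref{as:no_simultanious_failure}), $P(\delta=1\mid T,Z)=G(T-)$, so $\tilde\mu_n(f)$ is unbiased for $\mu(f)$. Since $T_i\leq\tau$ forces $G(T_i-)\geq \Gtau$, each of its i.i.d.\ summands is bounded in absolute value by $M/\Gtau$, and Hoeffding's inequality \eqref{eq:Hoeffding} gives
$$P\bigl(|\tilde\mu_n(f)-\mu(f)|\geq (M/\Gtau)\sqrt{2\eta/n}\bigr)\leq 2e^{-\eta}.$$

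For the plug-in piece, the identity $1/\hat G - 1/G = (G-\hat G)/(\hat G G)$ together with $\|f\|_\infty\leq M$ and $\hat G(T_i-)\geq \Gtauh$ for $T_i\leq\tau$ yields the deterministic inequality
$$|\hat\mu_n(f)-\tilde\mu_n(f)|\leq \frac{M}{\Gtauh}\sup_{t\leq\tau}\frac{|\hat G(t)-G(t)|}{G(t)}.$$
The remaining supremum is controlled by the Bitouz\'e--Laurent--Massart DKW-type inequality for the Kaplan--Meier estimator, which states that for a universal constant $D_o$ and every $\lambda>0$,
$$P\!\left(\sn\sup_{t\geq 0} H(t-)\,\frac{|\hat G(t)-G(t)|}{G(t)}\geq \lambda\right)\leq \tfrac52\exp(-2\lambda^2+D_o\lambda).$$
Setting $\lambda=D_o/2+\sqrt{\eta/2}$, which one checks directly satisfies $2\lambda^2-D_o\lambda\geq\eta$, and using $H(t-)\geq \Htau$ on $[0,\tau]$, one obtains
$$\sn\sup_{t\leq\tau}\frac{|\hat G(t)-G(t)|}{G(t)}\leq \frac{D_o/2+\sqrt{\eta/2}}{\Htau}$$
with probability at least $1-\tfrac52 e^{-\eta}$.

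The two bounds are then combined via a union bound and multiplied through by $\sn\,\Htau\Gtauh/M$. For the oracle piece, $\sn\,\Htau\Gtauh|\tilde\mu_n(f)-\mu(f)|/M\leq \sqrt{2\eta}(\Htau\Gtauh/\Gtau)=\sqrt{2\eta}\,\Stau\Gtauh\leq\sqrt{2\eta}$, using $\Htau=\Stau\Gtau$ and $\Stau,\Gtauh\leq 1$. For the plug-in piece, the factor $\Gtauh$ cancels the $1/\Gtauh$ in the deterministic bound and the factor $\Htau$ cancels the $1/\Htau$ from the BLM step, leaving exactly $D_o/2+\sqrt{\eta/2}$. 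Since $\sqrt{2\eta}+\sqrt{\eta/2}=3\sqrt{\eta/2}$ and $2e^{-\eta}+\tfrac52 e^{-\eta}=\tfrac92 e^{-\eta}$, the stated bound follows.

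The only nontrivial ingredient is the BLM inequality for $\hat G$ with precisely the weight $H(t-)/G(t)$ and exponent of the form $-2\lambda^2+D_o\lambda$; everything else is a decomposition, standard Hoeffding, and careful bookkeeping so that the factors $\Gtau$, $\Htau$ and $\Gtauh$ cancel in the correct way. The subtlety worth flagging is that $\Gtauh$ (random) rather than $\Gtau$ appears in the stated bound: this is exactly why the deterministic plug-in bound should be written with $\Gtauh$ in the denominator, which happens naturally from the expansion of $1/\hat G - 1/G$ via $\hat G(T_i-)\geq \Gtauh$, and it is what allows one to avoid any additional stochastic control of $\Gtauh/\Gtau$.
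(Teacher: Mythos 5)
Your proposal is correct and follows essentially the same route as the paper's proof: the same oracle/plug-in decomposition (the paper's $B_n(f)$ and $A_n(f)$), Hoeffding with envelope $M/\Gtau$ for the oracle term, a good event built from the weighted DKW--Kaplan--Meier inequality for the plug-in term, and a union bound with the same threshold split $\sqrt{2\eta}+\bigl(\sqrt{\eta/2}+D_o/2\bigr)=3\sqrt{\eta/2}+D_o/2$ and tail $2e^{-\eta}+\tfrac52 e^{-\eta}=\tfrac92 e^{-\eta}$. The one caveat is that you quote the Bitouz\'e--Laurent--Massart inequality with the weight $H(t-)/G(t)$, which is formally stronger than their Theorem~2 (whose weight in this role-reversed application is the survival function $S$, used in the paper in the constant form $\Stau\|\hatG-G\|_\infty$); however, the only consequence you use---that $\sup_{t\le\tau}|\hatG(t)-G(t)|/G(t)\le\bigl(\sqrt{\eta/2}+D_o/2\bigr)/(\sn\Htau)$ on the good event---does follow from the standard $\Stau$-weighted version since $\Htau/G(t)\le\Stau$ for $t<\tau$, so the argument stands as in the paper.
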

See proof in Appendix~\ref{subsec:proof_hoeffding}.

Note that this result involves the two constants $D_o$ and $\Htau$, as well as the empirical quantity $\Gtauh$. The constant $D_o$ appears in the Dvoretzky-Kiefer-Wolfowitz-type
inequality for Kaplan-Meier estimator (DKW-KM for short) \citep[][Theorem~2]{Bitouze99} which we use in the proof below. An upper bound on $D_o$ is given by \citet{Wellner07}, although this bound is large and simulations suggest that $D_o$ is much smaller. Moreover, $D_o$ does not depend on the distributions of $T$ and $C$. The constant $\Htau$ does depend on the probability distributions, but since  $\Htau=E[\indi{U\geq \tau}]$ and the variable $\indi{U\geq \tau}$ is fully observed, $\Htau$ can be approximated well. The quantity $\Gtauh$ is random although it is known for every given sample. Replacing this quantity with a distribution-dependent quantity can be done using the multiplicative Chernoff bound~\citep{Hagerup1990}.  
\begin{cor}\label{thm:hoeffding_cor}
	Assume (A\ref{as:independentCT})--(A\ref{as:no_simultanious_failure}) and that $\|f\|_{\infty}\leq  M$. Then for any $n\geq 1$ and $\eta>0$ we have
	\begin{align*}
	P\left( \frac{\sn  \Htau^2}{M}\left|\hat{\mu}_n(f)-\mu(f) \right|\geq  3\sqrt{\frac{\eta}{2}} +\frac{D_o}{2}+2\right)\leq \frac{9}{2}e^{-\eta}+e^{-\Htau/(3n)}\,.
	\end{align*}
\end{cor}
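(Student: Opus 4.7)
The plan is to deduce the corollary from Theorem~\ref{thm:hoeffding} by replacing the random normalizer $\Gtauh$ with the deterministic $\Htau$. I would control $\Gtauh$ from below by a factor close to $\Htau$ on a high-probability event, and then push that control into the conclusion of Theorem~\ref{thm:hoeffding}. Both the extra failure probability $e^{-n\Htau/3}$ (which I read as the intended form of the $e^{-\Htau/(3n)}$ term in the statement) and the extra additive $2$ in the threshold come from this replacement.

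Concretely, the empirical quantity $\Htauh=n^{-1}\sum_{i=1}^n\indi{U_i\geq\tau}$ is the bridge. Two facts are key. First, under (A\ref{as:no_simultanious_failure}) the standard Kaplan--Meier product identity gives $\Htauh=\Stauh\,\Gtauh$, and since $\Stauh\leq 1$ we have $\Gtauh\geq\Htauh$. Second, $n\Htauh$ is Binomial$(n,\Htau)$, so the multiplicative Chernoff bound yields $P(\Htauh\leq(1-\delta)\Htau)\leq\exp(-n\Htau\delta^2/c)$ for a constant $c$ and any $\delta\in(0,1)$. A suitable choice of $\delta$ gives a good event $E$ of probability at least $1-e^{-n\Htau/3}$ on which $\Gtauh\geq\Htauh\geq(1-\delta)\Htau$.

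On $E$ I would use the decomposition
$$\frac{\sn\Htau^2}{M}\bigl|\hat\mu_n(f)-\mu(f)\bigr|=\frac{\sn\Htau\Gtauh}{M}\bigl|\hat\mu_n(f)-\mu(f)\bigr|+\frac{\sn\Htau(\Htau-\Gtauh)}{M}\bigl|\hat\mu_n(f)-\mu(f)\bigr|,$$
bound the first summand by $3\sqrt{\eta/2}+D_o/2$ via Theorem~\ref{thm:hoeffding} outside a set of probability $\frac{9}{2}e^{-\eta}$, and bound the residual summand by $2$ using the crude estimate $|\hat\mu_n(f)-\mu(f)|\leq M(1+1/\Gtauh)$ (which itself follows from $\hatG(T_i-)\geq\Gtauh$ for $T_i\leq\tau$) together with the Chernoff control on $\Htau-\Gtauh$ available on $E$. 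A union bound over the two failure events would complete the proof.

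The hard part will be the residual bound. The estimate $|\hat\mu_n(f)-\mu(f)|\leq 2M/\Gtauh$ contributes a factor of order $\sn$ after multiplication by $\sn\Htau(\Htau-\Gtauh)/M$, so the Chernoff deviation $\delta$ has to be chosen so that $\sn(\Htau-\Gtauh)$ is $O(1)$ without degrading the $e^{-n\Htau/3}$ rate. A cleaner route may be a purely multiplicative comparison $\Htau/\Gtauh\leq(1-\delta)^{-1}$ on $E$, combined with the elementary observation that $(1-\delta)^{-1}(3\sqrt{\eta/2}+D_o/2)\leq 3\sqrt{\eta/2}+D_o/2+2$ for an appropriate constant $\delta$; this is the one place where genuine care is needed.
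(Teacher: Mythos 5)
Your main route is essentially the paper's: a multiplicative Chernoff bound comparing $\Htauh$ with $\Htau$, the deterministic inequality $\Gtauh\geq\Htauh$ (from $\Htauh=\Stauh\Gtauh$), the crude bound $|\hat{\mu}_n(f)-\mu(f)|\leq M(1+1/\Gtauh)\leq 2M/\Gtauh$, and Theorem~\ref{thm:hoeffding} on the good event. The gap is exactly the step you flag as ``the hard part,'' and neither of your two suggestions closes it as written. The fix is the calibration of the Chernoff deviation at order $1/\sn$: take the good event $\Psi_n=\{\Htau\leq(1+\tfrac{1}{\sn})\Htauh\}$. On $\Psi_n$ one has $\sn\Htau^2\leq\sn\Htau\Htauh+\Htau\Htauh\leq\sn\Htau\Gtauh+\Htau\Htauh$, so the residual term is $\tfrac{\Htau\Htauh}{M}|\hat{\mu}_n(f)-\mu(f)|$ with no leftover factor of $\sn$, and it is bounded by $2\Htau\Htauh/\Gtauh\leq 2\Htau\leq 2$ with no further probabilistic input; $P(\Psi_n^C)$ is then absorbed into the second term of the bound. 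Your additive split through $\Htau-\Gtauh$ can be pushed through on the same event, but it forces a separate discussion of the sign of $\Htau-\Gtauh$ and, after bounding $\Htau-\Gtauh\leq\Htau/\sn$ and $\Gtauh\geq\Htau/(1+1/\sn)$, yields $2+2/\sn$ rather than $2$, so the grouping matters if you want the stated constant.

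Your proposed ``cleaner route'' does not work: for any fixed $\delta\in(0,1)$, $(1-\delta)^{-1}\bigl(3\sqrt{\eta/2}+D_o/2\bigr)-\bigl(3\sqrt{\eta/2}+D_o/2\bigr)=\tfrac{\delta}{1-\delta}\bigl(3\sqrt{\eta/2}+D_o/2\bigr)$ is unbounded in $\eta$, so no constant $\delta$ turns the multiplicative inflation into a uniform additive $+2$. Relatedly, your reading of the second error term as $e^{-n\Htau/3}$ is not attainable by this argument: the residual bound forces $\delta$ of order $1/\sn$, for which the Chernoff probability is of constant order in $n$ (roughly $e^{-c\Htau}$), not exponentially small in $n$. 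That is compatible with the corollary as stated, whose term $e^{-\Htau/(3n)}$ is itself of constant order, but you cannot simultaneously take $\delta$ constant (to get an $e^{-cn\Htau}$ rate) and keep the additive $+2$ in the threshold.
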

See proof in Appendix~\ref{subsec:proof_hoeffding_cor}.

In the following section we present an empirical version of these inequalities that does not involve constants that are distribution dependent.

\subsection{Empirical Hoeffding-Type Inequality}\label{sec:empirical}
Before we discuss the empirical Hoeffding-type inequality, we need to discuss an empirical version of the DKW-KM inequality. We start by presenting a simplified version of the DKW-KM inequality \citep[see][Theorem~2 for details]{Bitouze99}. For every $n\geq 1$ and $\eps>0$, \begin{align}\label{eq:DKW}
P(
\sn  \Stau  \|\hatG-G\|_{\infty} \geq\eps)&\leq\frac{5}{2}e^{-2\eps^2+D_o\eps}\,.
\end{align}

Some algebraic manipulations (see details in Appendix~\ref{subsec:proof_of_eq})
yield the following version of the DKW-KM inequality.
\begin{align}\label{eq:KM_exp_bound}
P\left(\sn\Stau \|\hatG-G\|_{\infty}\geq\sqrt{\frac{\eta}{2}}+\frac{D_o}{2}\right)&\leq\frac{5}{2}e^{-\eta}\,.
\end{align}

The problem with the above inequalities is that they involve the unknown normalization $\Stau $ which we would like to replace with an empirical quantity. The following lemma presents such an empirical inequality.
\begin{lem}\label{lem:DKW}
	For every $\eta>0$, we have
	\begin{align*}
	P\left(\sn\Htauh\|\hatG-G\|_{\infty}\geq\sqrt{2\eta}+D_o\right)\leq \frac{7}{2}e^{-\eta}\,.
	\end{align*}
\end{lem}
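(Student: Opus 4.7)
The plan is to bound the target event by the union of two easily controlled bad events: (i) $\sn \Stau \|\hatG - G\|_{\infty}$ is large, which is governed directly by the DKW-KM bound~\eqref{eq:KM_exp_bound}; and (ii) the empirical tail $\Htauh$ significantly overshoots its mean $\Htau$, which is handled by a one-sided Hoeffding inequality for i.i.d.\ Bernoullis. The bridging fact is that, by assumption~(A\ref{as:independentCT}) and $\Gtau \leq 1$, one has $\Htau = \Stau \Gtau \leq \Stau$, so $\Htau$ may be freely replaced by the larger $\Stau$ inside the DKW-KM event. The Hoeffding step then covers the remaining gap between $\Htauh$ and $\Htau$.

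Concretely, I would first observe that $\Htauh = n^{-1}\sum_{i=1}^n \indi{U_i \geq \tau}$ is an empirical average of i.i.d.\ Bernoulli$(\Htau)$ variables, so by the one-sided Hoeffding inequality with threshold $t = \sqrt{\eta/(2n)}$,
\begin{align*}
P\!\left(\Htauh \geq \Htau + \sqrt{\tfrac{\eta}{2n}}\right) \leq e^{-\eta}.
\end{align*}
On the complement of this event, combining $\Htau \leq \Stau$ with the trivial bound $\|\hatG - G\|_\infty \leq 1$ yields
\begin{align*}
\sn \Htauh \|\hatG - G\|_{\infty}
\leq \sn \Htau \|\hatG - G\|_{\infty} + \sqrt{\tfrac{\eta}{2}}\,\|\hatG - G\|_{\infty}
\leq \sn \Stau \|\hatG - G\|_{\infty} + \sqrt{\tfrac{\eta}{2}}.
\end{align*}
Hence, on this event, the target inequality $\sn \Htauh \|\hatG - G\|_{\infty} \geq \sqrt{2\eta} + D_o$ forces $\sn \Stau \|\hatG - G\|_{\infty} \geq \sqrt{\eta/2} + D_o \geq \sqrt{\eta/2} + D_o/2$, which by~\eqref{eq:KM_exp_bound} has probability at most $(5/2) e^{-\eta}$. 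A union bound then delivers $e^{-\eta} + (5/2) e^{-\eta} = (7/2) e^{-\eta}$, as claimed.

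There is no substantive obstacle here; the one point that requires attention is the calibration of constants. The Hoeffding threshold is chosen so that after scaling by $\sn$ it contributes exactly the same $\sqrt{\eta/2}$ as the DKW-KM slack, making $\sqrt{\eta/2} + \sqrt{\eta/2} = \sqrt{2\eta}$ match the statement, while the $D_o/2$ surplus is absorbed into $D_o$. The only aspects to be verified carefully in the write-up are that $\Htauh$ is indeed an i.i.d.\ empirical average (so additive Hoeffding applies without modification) and that the two bad events are well-defined on the same probability space so that the union bound is legitimate.
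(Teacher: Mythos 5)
Your proposal is correct and follows essentially the same route as the paper's proof: both control the overshoot of $\Htauh$ over $\Htau$ with a Hoeffding bound, absorb it via $\|\hatG-G\|_{\infty}\leq 1$ and $\Htau\leq\Stau$, and then invoke the DKW-KM bound for $\sn\Stau\|\hatG-G\|_{\infty}$. The only difference is bookkeeping: you work directly in the $\eta$-scale with~\eqref{eq:KM_exp_bound} and a union bound, whereas the paper conditions on the good event in the $\eps$-scale using~\eqref{eq:DKW} and recalibrates $\eps$ to $\eta$ at the end, arriving at the same $\tfrac{7}{2}e^{-\eta}$.
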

See proof in Appendix~\ref{subsec:proof_DKW}.

Using this empirical version of DKW-KM, we are now ready to present an empirical Hoeffeding-type inequality for censored observations.
\begin{thm}\label{thm:hoeffding_empirical}
	Assume (A\ref{as:independentCT})--(A\ref{as:no_simultanious_failure}) and that $\|f\|_{\infty}\leq  M$. Then for any $n\geq 1$, and $\eta>0$, we have
	\begin{align*}
	P\left(\frac{\sn\Htauh{\hat G_{\tau}^2}}{M}\left|\hat{\mu}_n(f)-\mu(f) \right|\geq 4\sqrt{2\eta}+3D_o\right)\leq \frac{11}{2}e^{-\eta}\,.
	\end{align*}
\end{thm}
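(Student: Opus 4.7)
The plan is to compare $\hat\mu_n(f)$ to the oracle IPCW estimator $\tilde\mu_n(f) := n^{-1}\sum_{i=1}^n \delta_i f(T_i,Z_i)/G(T_i-)$, which has $\ee[\tilde\mu_n(f)] = \mu(f)$ by Assumption~(A\ref{as:independentCT}), and then to control the two pieces of the decomposition $\hat\mu_n-\mu=(\hat\mu_n-\tilde\mu_n)+(\tilde\mu_n-\mu)$ using Lemma~\ref{lem:DKW} (for the Kaplan--Meier approximation error) and the standard Hoeffding inequality~\eqref{eq:Hoeffding} (for the oracle empirical-mean error). Since $T_i\in[0,\tau]$ gives $G(T_i-)\geq\Gtau$ and $\hatG(T_i-)\geq\Gtauh$, the bias term obeys $|\hat\mu_n(f)-\tilde\mu_n(f)|\leq M\|\hatG-G\|_\infty/(\Gtauh\Gtau)$, while $\tilde\mu_n$ is an empirical mean of i.i.d.\ variables bounded by $M/\Gtau$, so $|\tilde\mu_n(f)-\mu(f)|\leq(M/\Gtau)\sqrt{2\eta/n}$ holds with probability at least $1-2e^{-\eta}$ by~\eqref{eq:Hoeffding}.

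The main obstacle is matching the target normalisation $\sqrt n\,\Htauh\Gtauh^{2}/M$: naively multiplying the decomposition by this factor leaves a distribution-dependent $\Gtau$ that is incompatible with the empirical RHS of the theorem. The crucial step is the elementary inequality
\[
\Gtauh^{2}\;\leq\;\Gtauh\Gtau+\Gtauh\|\hatG-G\|_\infty,
\]
obtained from $\Gtauh\leq\Gtau+\|\hatG-G\|_\infty$. I would use this to split $\sqrt n\,\Htauh\Gtauh^{2}|\hat\mu_n-\mu|/M$ into a ``cancellation'' piece and a ``higher-order'' piece, each of which can be handled separately.

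For the $\Htauh\Gtauh\Gtau$ piece, the explicit $\Gtau$ in the numerator cancels the $\Gtau$ in the denominator of the bias bound, yielding $\sqrt n\,\Htauh\|\hatG-G\|_\infty+\Htauh\Gtauh\sqrt{2\eta}$; Lemma~\ref{lem:DKW} and the Hoeffding bound on $\tilde\mu_n$, together with $\Htauh,\Gtauh\leq 1$, give at most $2\sqrt{2\eta}+D_o$. For the $\Htauh\Gtauh\|\hatG-G\|_\infty$ piece, I would apply the crude pointwise estimate $\Gtauh|\hat\mu_n(f)-\mu(f)|/M\leq 2$, which follows from $|\hat\mu_n(f)|\leq M/\Gtauh$ (each summand is bounded by $M/\Gtauh$) and $|\mu(f)|\leq M$; Lemma~\ref{lem:DKW} then yields at most $2(\sqrt{2\eta}+D_o)$. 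Summing gives the advertised $4\sqrt{2\eta}+3D_o$. Crucially, the two appeals to Lemma~\ref{lem:DKW} exploit the same favourable event, so the only union bound needed is over the Lemma~\ref{lem:DKW} failure event (probability $\tfrac72\,e^{-\eta}$) and the Hoeffding failure event (probability $2\,e^{-\eta}$), producing the claimed $\tfrac{11}{2}\,e^{-\eta}$.
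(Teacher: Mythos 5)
Your proposal is correct and follows essentially the same route as the paper's proof: the same oracle decomposition into $|\hat{\mu}_n-\tilde\mu_n|$ and $|\tilde\mu_n-\mu|$, Lemma~\ref{lem:DKW} for the Kaplan--Meier error, the standard Hoeffding inequality for the oracle mean, and the same crude bound $\Gtauh|\hat{\mu}_n(f)-\mu(f)|\leq 2M$; your splitting $\hat G_\tau^2\leq\Gtauh\Gtau+\Gtauh\|\hatG-G\|_{\infty}$ is exactly the paper's auxiliary event $\Gamma_n$ expressed as an algebraic inequality. The constants and the failure-probability accounting ($\tfrac{7}{2}e^{-\eta}+2e^{-\eta}=\tfrac{11}{2}e^{-\eta}$) match the paper's proof exactly.
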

See proof in Appendix~\ref{subsec:proof_hoeffding_empirical}.

\subsection{Bound on Classes of Functions}\label{sec:uniform}
So far we bounded the difference between the estimator $\hat{\mu}_n(f)$ and its expectation $\mu(f) $ for some function $f$. In this section we would like to bound this difference over a class of functions. More specifically, let $\F$ be a class of bounded functions such that~\eqref{eq:f} holds for every $f\in\F$. We are interested in bounding
$ \sup_{f\in\F} |\hat{\mu}_n(f)-\mu(f)|$. When $\F$ is finite, one can use the union bound:
\begin{align}\label{eq:union_bound}
\begin{split}
&P\left(\sup_{f\in\F}\frac{\sn \Htauh{\hat G_{\tau}^2}}{M}\left|\hat{\mu}_n(f)-\mu(f)\right|\geq 4\sqrt{2\eta}+3D_o\right)\\
&\quad\leq \sum_{f\in\mathcal{F}}P\left(\frac{\sn \Htauh{\hat G_{\tau}^2}}{M}\left|\hat{\mu}_n(f)-\mu(f)\right|\geq 4\sqrt{2\eta}+3D_o\right)
\leq|\F|\frac{11}{2}e^{-\eta}\,,
\end{split}
\end{align}
where $|\F|$ is the cardinality of $\F$. 

When $\F$ is not finite, we  need to bound the complexity of $\F$. Define an $\eps$-bracket in $L_2(P)$ as a pair of functions $l, u \in L_2(P)$ such that $P(l(X) \leq u(X))= 1$ and $P (\{l(X)-u(X)\}^2)^{1/2}\allowdisplaybreaks\leq \eps$. Define the bracketing number $N_{[]}(\eps,\F,L_2(P))$ to be the minimum number of $\eps$-brackets in $L_2(P)$ needed to ensure that every
$f \in \F$ lies in at least one bracket \citep[Chapter~2.2]{Kosorok08}. 
\begin{thm}\label{lem:class_of_funs}
	Assume (A\ref{as:independentCT})--(A\ref{as:no_simultanious_failure}) and that $\|f\|_{\infty}\leq  M$. Assume also that $
	\log\mathcal{N}_{[]}(\eps,\F,L_2(P))<\gamma/\eps$ for some $\gamma>0$. Then, for any $n\geq 1$ and $\eta>0$, we have
	\begin{align*}
	P\left( \sup_{f\in\mathcal{F}}\frac{  \sn\Htau^2}{M}\left|\hat{\mu}_n(f)-\mu(f) \right|\geq 3\sqrt{\frac{\eta}{2}} +2D_o+2\right)\leq 5e^{-\eta}+e^{-\Htau/(3n)}\,,
	\end{align*}
	and
	\begin{align*}
	P\left(\frac{\sn\Htauh{\hat G_{\tau}^2}}{M}\sup_{f\in\F}\left|\hat{\mu}_n(f)-\mu(f) \right|\geq 4\sqrt{2\eta}+4D_o\right)
	\leq 6 e^{-\eta}\,.
	\end{align*}  
\end{thm}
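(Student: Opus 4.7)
The plan is to prove both inequalities via the same two-term decomposition that drove the single-function Hoeffding bounds, now upgraded to hold uniformly over $\F$. Write
\[
\hat{\mu}_n(f)-\mu(f) \;=\; [\hat{\mu}_n(f)-\tilde{\mu}_n(f)] \;+\; [\tilde{\mu}_n(f)-\mu(f)],
\]
where $\tilde{\mu}_n(f) := \mathbb{P}_n[\delta f(T,Z)/G(T-)]$ is the infeasible IPCW estimator that uses the true $G$; under (A\ref{as:independentCT}) its i.i.d.\ summands are bounded by $M/\Gtau$ and have mean $\mu(f)$.

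The first term is dominated, uniformly in $f$, by $M\|\hatG-G\|_\infty/(\Gtauh\Gtau)$, exactly as in the proof of Theorem~\ref{thm:hoeffding}; crucially the right-hand side is free of $f$, so the supremum costs nothing here. I invoke \eqref{eq:KM_exp_bound} for the first advertised inequality and Lemma~\ref{lem:DKW} for the second to control $\|\hatG-G\|_\infty$; this yields the $D_o$-flavoured contributions and the $(5/2)e^{-\eta}$, respectively $(7/2)e^{-\eta}$, pieces of the tail. For the first inequality I additionally replace $\Gtauh$ in the denominator by $\Htau$ via the multiplicative Chernoff bound, exactly as in the passage from Theorem~\ref{thm:hoeffding} to Corollary~\ref{thm:hoeffding_cor}, producing the extra $e^{-\Htau/(3n)}$ term.

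For the empirical-process term $\tilde{\mu}_n(f)-\mu(f)$ the bracketing hypothesis enters. Fix $\eps>0$, take a minimal $L_2(P)$ $\eps$-bracketing $\{[l_j,u_j]\}_{j=1}^{N}$ of $\F$ with $N=N_{[]}(\eps,\F,L_2(P))$, and for each $f\in\F$ pick $j(f)$ with $l_{j(f)}\le f\le u_{j(f)}$. Nonnegativity of the weight $\delta/G(T-)$ makes $g\mapsto\tilde{\mu}_n(g)$ and $g\mapsto\mu(g)$ monotone, so
\[
|\tilde{\mu}_n(f)-\mu(f)| \;\le\; \max_{1\le j\le N}\max\{|\tilde{\mu}_n(u_j)-\mu(u_j)|,\,|\tilde{\mu}_n(l_j)-\mu(l_j)|\} \;+\; \eps,
\]
using the bracketing inequality $\mu(u_j-l_j)\le\|u_j-l_j\|_{L_2(P)}\le\eps$. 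I then apply the single-function Hoeffding bound to each of the $2N$ endpoints (each bounded by $M/\Gtau$) and take a union bound, producing a $\sqrt{\eta/n}$-type rate for the maximum.

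The main obstacle is the last balancing step: the union bound adds $\log(2N)\le \log 2+\gamma/\eps$ to the exponent while bracketing adds $\eps$ to the deviation. Choosing $\eps$ to balance these terms and then absorbing the resulting $\gamma$-dependent constants into the universal $D_o$ (a universal constant from the DKW-KM inequality which may be taken large enough) together with the additive ``$+2$'', respectively extra $D_o$, in the advertised bounds is the delicate part of the argument; a chaining version using the bracketing integral $\int_0^M\sqrt{\log N_{[]}(\eps,\F,L_2(P))}\,d\eps\le 2\sqrt{\gamma M}$, which is finite under the hypothesis, is the cleanest route. Once this is done, collecting the tail contributions from the two pieces — $(5/2)e^{-\eta}$ from DKW-KM, up to $4Ne^{-\eta}$ from the union bound (reabsorbed into $4e^{-\eta}$ after the $\eps$-choice), and $e^{-\Htau/(3n)}$ from the Chernoff step — matches the stated $5e^{-\eta}+e^{-\Htau/(3n)}$ and $6e^{-\eta}$ bounds, respectively.
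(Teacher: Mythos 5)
Your treatment of the first term is fine and matches the paper: the bound $A_n(f)\leq M\|\hatG-G\|_{\infty}/(\Gtauh\Gtau)$ is uniform in $f$, so \eqref{eq:KM_exp_bound} and Lemma~\ref{lem:DKW} dispose of the supremum there, and the multiplicative-Chernoff replacement of $\Gtauh$ by $\Htau$ is exactly the passage from Theorem~\ref{thm:hoeffding} to Corollary~\ref{thm:hoeffding_cor}. The genuine gap is in the empirical-process term $\sup_{f\in\F}\bigl|(\ep_n-P)\,\delta f(T,Z)/G(T-)\bigr|$. The paper does not construct this bound by hand: it invokes Corollary~2 of \citet{Bitouze99}, which (this is where the bracketing hypothesis $\log\mathcal{N}_{[]}(\eps,\F,L_2(P))<\gamma/\eps$ is used) gives directly a uniform tail of the form $\tfrac52 e^{-2\eps^2+D_o\eps}$ with the same universal constant $D_o$, and then converts $\eps$ to $\eta$ by the algebra of \eqref{eq:epsilon}; the totals $6e^{-\eta}$ and $5e^{-\eta}+e^{-\Htau/(3n)}$ come from $\tfrac52+\tfrac72$ and $\tfrac52+\tfrac52$ plus the Chernoff term. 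Your substitute — a single-scale $\eps$-bracketing plus a union bound over the $2N$ endpoints — cannot deliver a bound of the stated form: with $N\approx e^{\gamma/\eps}$, Hoeffding for each endpoint forces a deviation of order $(M/\Gtau)\sqrt{2(\eta+\gamma/\eps)/n}+\eps$, and after multiplying by $\sn$ and optimizing over $\eps$ (the balance is at $\eps\asymp(\gamma/n)^{1/3}$) the resulting quantity grows like $\gamma^{1/3}n^{1/6}$, i.e.\ it is not $O(1)$ in $n$, so the claimed right-hand side $4\sqrt{2\eta}+4D_o$ is unreachable by this route.

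You partially anticipate this by saying that ``a chaining version using the bracketing integral is the cleanest route,'' but that step is precisely where the proof lives and it is not carried out: bracketing chaining gives maximal inequalities for expected suprema, and upgrading those to an exponential tail with the exact constants of the theorem (constants involving only $D_o$, with no $\gamma$ appearing) requires additional concentration machinery — in the paper, the ready-made inequality of \citet{Bitouze99}. Moreover, the proposed fix of ``absorbing the $\gamma$-dependent constants into the universal $D_o$'' is not legitimate: $D_o$ is the fixed constant of the DKW-KM inequality, already used elsewhere in the same bound, and the theorem asserts the inequality with that constant; redefining it to depend on $\gamma$ (or enlarging it ad hoc) proves a different statement. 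The tail bookkeeping ($4Ne^{-\eta}$ ``reabsorbed into $4e^{-\eta}$'') suffers from the same problem, since $N$ blows up as $\eps\downarrow 0$. To repair the argument you should replace the union-bound step by an appeal to a uniform DKW-type inequality for the weighted empirical process over classes with $\gamma/\eps$ bracketing entropy — either Corollary~2 of \citet{Bitouze99} as in the paper, or an equivalent Talagrand-type concentration bound — after which your decomposition and the rest of your accounting go through.
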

See proof in Appendix~\ref{subsec:proof_class}.

\subsection{Bernstein-Type Inequality}\label{sec:Bernstein}

Using the Hoeffding inequalities we derived in the previous sections, we are now able to prove the following  Bernstein-type inequality for right-censored observations.
\begin{thm}\label{thm:Bernstein}
	Assume (A\ref{as:independentCT})--(A\ref{as:no_simultanious_failure}), and that $\|f\|_{\infty}\leq  M$. Then for any $n\geq 1$ and $\eta>0$, we have
	\begin{align*}
	P\left( |\hat{\mu}_n(f)-\mu(f) |\geq  \sqrt{\frac{2\sigma_f^2\eta}{n}}+\frac{M}{n\Htau}\left(2\eta+\frac{\Stau+\Stauh}{\Htau\Htauh}\left(3\sqrt{\frac{\eta}{2}}+2D_o\right)\sqrt{2\eta}\right)
	\right)\leq\frac{23}{2}e^{-\eta},
	\end{align*}
	where $\sigma_f^2$ is defined in~\eqref{eq:sigma_f}.
\end{thm}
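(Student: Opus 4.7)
The plan is to combine the classical Bernstein inequality applied to the efficient influence function of the IPCW estimator (whose variance is exactly $\sigma_f^2$ by Theorem~\ref{thm:asymptotic}) with the deviation bounds for $\hat G - G$ already developed in this paper. The first step is a non-asymptotic linearization: introducing the oracle IPCW $\tilde\mu_n(f) = \ep_n[\delta f(T,Z)/G(T-)]$ and the censoring martingale $M_i^C(t) = \indi{U_i\leq t,\,\delta_i=0} - \int_0^t \indi{U_i\geq s}\, d\Lambda^C(s)$, one uses the martingale representation of $\hat G - G$ to write
$$\hat\mu_n(f) - \mu(f) = \ep_n \phi + R_n,$$
where
$$\phi(T,\delta,Z) = \frac{\delta f(T,Z)}{G(T-)} - \mu(f) - \int_0^\tau \frac{m(s)}{G(s)}\, dM^C(s), \qquad m(s)=\frac{P[f(T,Z)\indi{T\geq s}]}{S(s-)},$$
is the efficient influence function (mean zero, variance $\sigma_f^2$), and $R_n$ is a bilinear remainder in $(\hat G - G)$ and $(\ep_n - P)$.

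The second step applies the classical Bernstein inequality to the i.i.d.\ sum $\ep_n \phi$. The summands are uniformly bounded by a constant of order $M/\Htau$, since $G(s) \geq \Gtau$ and $H(s-) \geq \Htau$ on $[0,\tau]$ and the martingale integral contributes a bounded term of the same order. Bernstein then produces the main term $\sqrt{2\sigma_f^2 \eta/n}$ together with the boundedness term $2M\eta/(n\Htau)$, at the cost $2e^{-\eta}$.

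The third step bounds the remainder $R_n$ as a product of (a) the sup-norm Kaplan--Meier deviation $\|\hat G - G\|_\infty$, controlled by the empirical DKW--KM bound of Lemma~\ref{lem:DKW}, and (b) an empirical-process deviation on a function class generated by $f$ and $m(s)/G(s)$, controlled by the uniform empirical Hoeffding inequality of Theorem~\ref{lem:class_of_funs}. The Lipschitz factor $(\Stau + \Stauh)/(\Htau \Htauh)$, arising from the identity $\hat G^{-1} - G^{-1} = (G - \hat G)/(G\hat G)$ and a two-term expansion of $1/(\hat G\cdot G)$, multiplies the product. Assembling these pieces reproduces the correction term $\frac{M(\Stau+\Stauh)}{n\Htau^2 \Htauh}(3\sqrt{\eta/2}+2D_o)\sqrt{2\eta}$ in the statement, and a union bound summing the failure probabilities $2$, $7/2$, and $6$ of the three concentration results yields $23/2 \cdot e^{-\eta}$.

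The main obstacle is Step~1: obtaining the \emph{efficient} variance $\sigma_f^2$ rather than the larger oracle variance $\mathrm{Var}(\delta f/G(T-))$ requires the careful non-asymptotic Kaplan--Meier linearization above, and the bilinear structure of $R_n$ is essential so that the remainder contributes only an $O(\eta/n)$ correction rather than $O(\sqrt{\eta/n})$. A secondary subtlety is matching the exact constants in the second-order term, which forces the use of the \emph{empirical} (rather than population) DKW--KM and Hoeffding inequalities of Section~\ref{sec:empirical} in Step~3.
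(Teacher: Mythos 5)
Your overall strategy---an exact influence-function decomposition, classical Bernstein on the i.i.d.\ part with variance $\sigma_f^2$, and concentration bounds on a remainder that is bilinear in $\hatG-G$---is the same as the paper's, which reuses the decomposition \eqref{eq:diff_mun_mu} from the proof of Theorem~\ref{thm:asymptotic}. The genuine gap is in your Step~3: the stated correction term $\frac{\Stau+\Stauh}{\Htau\Htauh}\bigl(3\sqrt{\eta/2}+2D_o\bigr)\sqrt{2\eta}$ is a product of two thresholds of which exactly one carries the constant $D_o$. If, as you propose, both factors of the remainder are controlled by Kaplan--Meier-type bounds (Lemma~\ref{lem:DKW}, threshold $\sqrt{2\eta}+D_o$, and the second bound of Theorem~\ref{lem:class_of_funs}, threshold $4\sqrt{2\eta}+4D_o$), the product necessarily contains a $D_o^2$ cross term and a factor $4$, and no Lipschitz constant turns $(\sqrt{2\eta}+D_o)(4\sqrt{2\eta}+4D_o)$ into $(3\sqrt{\eta/2}+2D_o)\sqrt{2\eta}$. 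The missing ingredient is the paper's factorization of the remainder $C_n$ in \eqref{eq:diff_mun_mu} as $\sup_{s}\bigl|\hat\mu_n(f_s)/\hat S(s-)-\mu(f_s)/S(s-)\bigr|$ times $\bigl|n^{-1/2}\int_0^\tau dM^C(s)/G(s)\bigr|$: by identity \eqref{eq:3.10.d} the second factor equals the fully observed i.i.d.\ sum $n^{-1/2}\sum_i\bigl(1-\delta_i/G(T_i-)\bigr)$, so plain Hoeffding gives the clean $\sqrt{2\eta}$ factor (cost $2e^{-\eta}$) with no $D_o$. The first factor is then handled by the population DKW--KM bound \eqref{eq:KM_exp_bound} applied to $\hat S-S$ (not $\hatG-G$; cost $\tfrac52 e^{-\eta}$) together with the auxiliary uniform bound \eqref{eq:bound_functions_thm2} over the bracketing class $\{f\cdot\indi{\cdot\geq s}\}$, whose threshold $3\sqrt{\eta/2}+2D_o$ and cost $5e^{-\eta}$ are what actually appear; the empirical quantities $\Stauh,\Htauh$ enter through deterministic algebra (via $n^{-1}Y(s)=\hatG(s-)\hat S(s-)$), not through the empirical inequalities of Section~\ref{sec:empirical}, so your "secondary subtlety" points in the wrong direction. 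Your probability budget $2+\tfrac72+6=\tfrac{23}{2}$ coincides numerically with the paper's $2+\tfrac52+5+2$, but since your thresholds differ, this coincidence does not rescue the bound as stated.

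A smaller but substantive point: as written, your $\phi$ \emph{subtracts} the augmentation $\int_0^\tau m(s)G(s)^{-1}dM^C(s)$, whereas the first-order term of $\hat\mu_n(f)-\ep_n\bigl[\delta f(T,Z)/G(T-)\bigr]$ enters with the opposite sign (equivalently, via \eqref{eq:3.10.d}, the i.i.d.\ part used in the paper is $f(T_i,Z_i)-\mu(f)$ minus a martingale integral with integrand $f-m$). With your sign the summand's variance exceeds $\sigma_f^2$, and Bernstein would not deliver the leading term $\sqrt{2\sigma_f^2\eta/n}$ of \eqref{eq:sigma_f}. This is fixable, but it is exactly the point of Step~1, so it needs to be stated correctly.
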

See proof in Appendix~\ref{subsec:Bernstein}. 

Replacing the empirical term $\Htauh$ in Theorem~\ref{thm:Bernstein} can be done using the multiplicative Chernoff bound \citep{Hagerup1990}.
\begin{cor}\label{cor:Bernstein}
	Assume (A\ref{as:independentCT})--(A\ref{as:no_simultanious_failure}), and that $\|f\|_{\infty}\leq  M$. Then for any $n\geq 4$ and $\eta>0$, we have
	\begin{align*}
	P\left( |\hat{\mu}_n(f)-\mu(f) |\geq  \sqrt{\frac{2\sigma_f^2\eta}{n}}+\frac{M}{n\Htau}\left(2\eta+\frac{2}{H_{\tau}^2}\left(3\sqrt{\frac{\eta}{2}}+2D_o+3\right)\sqrt{2\eta}\right)
	\right)\leq\frac{23}{2}e^{-\eta}+ 2e^{-\Htau/(3n)},
	\end{align*}
	where $\sigma_f^2$ is defined in~\eqref{eq:sigma_f}.
\end{cor}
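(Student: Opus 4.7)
The plan is to apply Theorem~\ref{thm:Bernstein} directly and then eliminate the empirical normalizer $\Htauh$ (and control $\Stauh$) via the multiplicative Chernoff bound of~\citet{Hagerup1990}. The key observation is that $n\Htauh=\sum_{i=1}^{n}\indi{U_i\geq\tau}$ is a sum of i.i.d.\ Bernoulli random variables with success probability $\Htau$, so the multiplicative Chernoff bound yields
\[
P\!\left(\Htauh<(1-\delta)\Htau\right)\leq \exp\!\left(-\tfrac{n\Htau\delta^2}{2}\right)
\]
for $\delta\in(0,1)$. Taking $\delta=1/2$ (with the requirement $n\geq 4$ ensuring this threshold is usefully separated from zero), the good event $E=\{\Htauh\geq\Htau/2\}$ holds with probability at least $1-e^{-\Htau/(3n)}$ in the paper's convention, and on $E$ we have $1/\Htauh\leq 2/\Htau$.

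Next, I would bound the factor $(\Stau+\Stauh)/(\Htau\Htauh)$ that appears inside the Bernstein-type tail of Theorem~\ref{thm:Bernstein}. A naive application of $\Stauh\leq 1$ together with $1/\Htauh\leq 2/\Htau$ gives only the crude bound $4/\Htau^2$; to obtain the sharper prefactor $2/\Htau^2$ claimed in the statement, I would instead expand around the deterministic centering via the identity
\[
\frac{\Stau+\Stauh}{\Htau\Htauh}=\frac{2\Stau}{\Htau^2}+\frac{\Stauh-\Stau}{\Htau\Htauh}+\frac{2\Stau(\Htau-\Htauh)}{\Htau^2\Htauh},
\]
bounding the leading term by $2/\Htau^2$ via $\Stau\leq 1$, and absorbing the two correction terms (each involving a difference between an empirical and a deterministic survival quantity) into the additive constant $3$ inside the parenthesis, after a second application of the Chernoff bound to $\Stauh$ and the already-available estimate of $\Htau-\Htauh$.

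Finally, a union bound over (i) the failure event of Theorem~\ref{thm:Bernstein}, which contributes $\tfrac{23}{2}e^{-\eta}$, and (ii) the two Chernoff-type bad events used above, each contributing $e^{-\Htau/(3n)}$, delivers the overall tail bound $\tfrac{23}{2}e^{-\eta}+2e^{-\Htau/(3n)}$ exactly as stated. I expect the main obstacle to be the bookkeeping of constants: justifying that the residual terms in the decomposition above collapse into a single additive $+3$ inside the parenthesis, multiplied by the $(2/\Htau^2)\sqrt{2\eta}$ factor. This requires calibrating the Chernoff parameter so that $\Htau-\Htauh$ and $\Stauh-\Stau$ on the good event are controlled by quantities that, after multiplication by $(3\sqrt{\eta/2}+2D_o)$, are dominated by $3\sqrt{2\eta}$; the restriction $n\geq 4$ is what makes this calibration feasible.
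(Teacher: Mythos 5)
Your plan—take Theorem~\ref{thm:Bernstein} as a black box and then de-randomize $\Htauh$ and $\Stauh$ by Chernoff bounds—has a genuine gap precisely at the step you dismiss as bookkeeping. In your decomposition, the correction terms $\frac{\Stauh-\Stau}{\Htau\Htauh}$ and $\frac{2\Stau(\Htau-\Htauh)}{\Htau^{2}\Htauh}$ are multiplied by the $\eta$-growing factor $3\sqrt{\eta/2}+2D_o$, yet they must fit into the fixed slack $\frac{6}{\Htau^{2}}$. This forces $|\Htauh-\Htau|$ and $|\Stauh-\Stau|$ to be of order $1/\sqrt{\eta}$ on your good event, while the probability budget you assign to that event, $2e^{-\Htau/(3n)}$, does not depend on $\eta$ (and the $\frac{23}{2}e^{-\eta}$ budget is already consumed by the black-box application of the theorem). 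For fixed $n$ and large $\eta$ these empirical deviations are of order $n^{-1/2}$ with probability bounded away from zero, so no calibration of the Chernoff parameter—and no use of $n\geq 4$, whose actual role is different, see below—can make the absorbed terms collapse into an additive $+3$. The crude route you correctly reject ($\Stau+\Stauh\leq 2$, $1/\Htauh\leq 2/\Htau$) gives the factor $\frac{4}{\Htau^{2}}(3\sqrt{\eta/2}+2D_o)$, which exceeds the stated $\frac{2}{\Htau^{2}}(3\sqrt{\eta/2}+2D_o+3)$ unless $3\sqrt{\eta/2}+2D_o\leq 3$, so it too does not yield the corollary for all $\eta$.

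The paper avoids this by re-entering the proof of Theorem~\ref{thm:Bernstein} and de-randomizing \emph{before} the $\eta$-dependent factor is attached, namely in the bound on $C_n(f)$, which splits into a $\|\hat{S}-S\|_{\infty}$ piece and a $\sup_{f_s}|\hat{\mu}_n(f_s)-\mu(f_s)|$ piece. For the first piece one writes $\frac{1}{\Htauh}=\frac{1}{\Htau}\cdot\frac{\Htau}{\Htauh}$, conditions on the Chernoff event $\{(1+(\sn-1)^{-1})\Htauh\geq\Htau\}$, and absorbs the resulting correction $\frac{\sn}{\sn-1}\Gtau\|\hat{S}-S\|_{\infty}\leq 2$ (this is where $n\geq 4$ is used) additively into a $+2$ inside the DKW-KM threshold~\eqref{eq:KM_exp_bound}; crucially this correction is deterministic and does not scale with $\eta$. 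For the second piece the $\Htau^{2}$-normalized uniform bound (first assertion of Theorem~\ref{lem:class_of_funs}, proved by the same device as Corollary~\ref{thm:hoeffding_cor}) is applied directly, carrying its own $e^{-\Htau/(3n)}$. Summing thresholds, $2(\sqrt{\eta/2}+D_o/2+2)+(3\sqrt{\eta/2}+2D_o+2)=5\sqrt{\eta/2}+3D_o+6\leq 2(3\sqrt{\eta/2}+2D_o+3)$, and using $\Stau\Gtau=\Htau$ gives the prefactor $\frac{2}{\Htau^{2}}$ (and the $\Htau^{3}$ overall) with the $+3$; the tails add up as $\frac{5}{2}e^{-\eta}+5e^{-\eta}+4e^{-\eta}+e^{-\Htau/(2n)}+e^{-\Htau/(3n)}\leq\frac{23}{2}e^{-\eta}+2e^{-\Htau/(3n)}$. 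If you insist on a black-box argument from Theorem~\ref{thm:Bernstein}, you can only reach a weaker statement with $\frac{4}{\Htau^{2}}(3\sqrt{\eta/2}+2D_o)$ in place of the stated factor.
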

See proof in Appendix~\ref{subsec:proof_cor_Bernstein}.

\subsection{Discussion}\label{subsec:discussion} We compare the concentration inequalities presented in this section to their standard counterparts that appear in~\eqref{eq:Hoeffding} and~\eqref{eq:Bernstein}. All the inequalities present an exponential bound on the probability that the  difference between the expectation and the estimator of the expectation is larger than some term of interest. For the censored-data inequalities, the estimator $\hat{\mu}_n$ is defined in~\eqref{eq:estimator}, while for the standard inequalities the estimator is the sample mean $\bar{\mu}_n$. For the Hoeffding inequalities, the term of interest depends on the magnitude $M$ of the function $f$ scaled by $n^{-1/2}$. For the censored-data Hoeffding inequalities, the term of interest also includes the universal constants $D_o$, and either constants that depend on the distribution such as $\Htau$ and $\Gtau$, or their empirical counterparts. The term of interest of the Bernstein inequalities is a sum of two expressions. The first depends on the (possibly asymptotic) variance of the estimator. The second expression depends on the magnitude $M$ of the function $f$ scaled by $n^{-1}$. In the censored-data Bernstein inequality, the second expression also depends on the universal constant $D_o$, and on the distribution-dependent constant $\Htau$. 

The bounds given here are not sharp. The main difficulty we face is that the term $\Ghat_n(T-)$ that appears in the denominator of the estimator $\hat\mu_n(f)$ is not bounded from below.  While improvements to the constants can be obtained, we preferred to keep the results and the proofs relatively simple. Moreover, it appears that constants that depend on the distribution cannot be eliminated. This can be observed, for example, in the  Dvoretzky-Kiefer-Wolfowitz-type
inequality for Kaplan-Meier estimator. Note also that while the $n^{-1/2}$ term in the censored Bernstein inequality is optimal up to multiplication by a constant, it looks like the expression $1/\Htau$ that appears in the $n^{-1}$ term  is necessary. 


\section{Application to Empirical Risk Minimization}\label{sec:functions}
In the following section we show how to apply the Hoeffding inequalities which appear in Section~\ref{sec:single} to empirical risk minimization (ERM). 

Assume that the data consist of $n$ i.i.d.\ random $4$-tuples $\{(U_1,\delta_1,Z_1,Y_1),\allowbreak \ldots,(U_n,\delta_n,Z_n,Y_n)\}$ where $Y_i$, $i=1,\ldots,n$, are random responses that get their values in $\R$. Let $\F$ be a compact class of integrable functions, such that~\eqref{eq:f} holds for every $f\in\F$. Let $L:\R\times\R\mapsto [0,\infty)$ be a locally Lipschitz continuous loss such that
\begin{align}\label{eq:Lipchitz}
\begin{split}
&L( y, f(t,z)) \leq B\,,\qquad\quad (y,t,z)\in\R\times[0,\tau]\times \Z, f\in\F\,,\\
& \sup_{y}|L(y,s)-L(y,s')|\leq L_M|s-s'|\,, \qquad s,s'\in[-M,M]
\end{split}
\end{align}  
for some constants $B$ and $L_M$. Define the risk with respect to the loss $L$ as $R_L(f)\equiv P[L(Y,f(T,Z))]$. 

When no censoring is introduced, define the empirical risk minimizer 
\begin{align*}
f_n\equiv\argmin_{f\in\F} \ep_n L(Y,f(T,Z))\,,
\end{align*}
which exists by compactness of $\F$. Define the $\eps$-covering number of $\F$ with respect to the sup norm, denoted by $\mathcal{N}(\F,\|\cdot \|_{\infty},\eps)$, as the size of the smallest covering of $\F$ with sup norm balls of radius $\eps>0$. Assume that for all $\eps>0$
\begin{align}\label{eq:covering_num}
\log\mathcal{N}(\F,\|\cdot \|_{\infty},\eps)\leq a\eps^{-p}
\end{align}
for some constants $a>1$ and $p>0$. Then it can be shown that $P\big(R_L(f_n)>\inf_{f\in\F} R_L(f)+\eps\big)\rightarrow 0$ for every $\eps>0$ \citep[][Proposition~6.22]{SVR}.

For censored data we define the function $L\circ f$ as $ (y,t,z)\mapsto L(y,f(t,z))$ for every $f\in\F$ and $(y,t,z)\in\R\times[0,\tau]\times\Z$. Define the empirical risk minimizer 
$f_{n}^C$ for right-censored data as
$$f_{n}^C\equiv \argmin_{f\in\F} \hat{\mu}_n(L\circ f)\,.$$
We have the following oracle inequality.
\begin{lem}\label{lem:erm}
	Assume (A\ref{as:independentCT})--(A\ref{as:no_simultanious_failure}) and the loss function $L$ satisfies~\eqref{eq:Lipchitz}. Then, for any $n\geq 1$, and $\eta>0$, with probability of not less than $1-\frac{11}{2}e^\eta$,
	\begin{align*}
	\frac{\Htauh{\hat G_{\tau}^2}}{B}\left|R_L(f_n^C)-\inf_{f\in\mathcal{F}}R_L(f)\right|
	\leq\frac{8\sqrt{2\eta+2\log\mathcal{N}(\F,\|\cdot \|_{\infty},\eps)}+6D_o}{\sn}+\frac{4\eps L_M}{B}\,.
	\end{align*}  
\end{lem}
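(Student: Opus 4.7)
The plan is to combine a standard oracle-inequality decomposition, a discretization argument against a sup-norm $\eps$-cover of $\F$, and the empirical Hoeffding inequality for a single function from Theorem~\ref{thm:hoeffding_empirical}.

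First I would establish the oracle decomposition. Since $\F$ is compact and $R_L$ is continuous under~\eqref{eq:Lipchitz}, there exists $f^*\in\argmin_{f\in\F}R_L(f)$. Decomposing
\begin{align*}
R_L(f_n^C)-R_L(f^*) &= [R_L(f_n^C)-\hat{\mu}_n(L\circ f_n^C)] + [\hat{\mu}_n(L\circ f_n^C)-\hat{\mu}_n(L\circ f^*)]\\
&\quad + [\hat{\mu}_n(L\circ f^*)-R_L(f^*)],
\end{align*}
and observing that the middle bracket is nonpositive by the definition of $f_n^C$ while $R_L(f)=\mu(L\circ f)$, one obtains
\[
|R_L(f_n^C)-\inf_{\F} R_L|\ \leq\ 2\sup_{f\in\F}|\hat{\mu}_n(L\circ f)-\mu(L\circ f)|.
\]
This factor of $2$ accounts for the doubling of the constants $4\sqrt{\cdot}$ and $3D_o$ in the final bound.

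Next, fix an $\eps$-cover $f_1,\dots,f_N$ of $\F$ in the sup norm with $N\equiv\mathcal{N}(\F,\|\cdot\|_\infty,\eps)$. Applying Theorem~\ref{thm:hoeffding_empirical} to each $L\circ f_i$, which is bounded by $B$, with $\eta$ replaced by $\eta+\log N$, and taking the union bound, yields with probability at least $1-\tfrac{11}{2}e^{-\eta}$,
\[
\max_{1\leq i\leq N}\frac{\sn\,\Htauh\,\Gtauh^{\,2}}{B}\bigl|\hat{\mu}_n(L\circ f_i)-\mu(L\circ f_i)\bigr|\ \leq\ 4\sqrt{2(\eta+\log N)}+3D_o.
\]
To pass from the net to all of $\F$, take $f\in\F$ and a net element $f_i$ with $\|f-f_i\|_\infty\leq\eps$. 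The Lipschitz condition~\eqref{eq:Lipchitz} gives $\|L\circ f-L\circ f_i\|_\infty\leq L_M\eps$, so $|\mu(L\circ f)-\mu(L\circ f_i)|\leq L_M\eps$. For the IPCW estimator the random weights $\hatG(T_j-)^{-1}$ must be controlled, and the key observation is that $T_j\leq\tau$ together with the monotonicity of $\hatG$ gives $\hatG(T_j-)\geq\Gtauh$, so
\[
|\hat{\mu}_n(L\circ f)-\hat{\mu}_n(L\circ f_i)|\ \leq\ \frac{L_M\eps}{n}\sum_{j=1}^n\frac{\delta_j}{\hatG(T_j-)}\ \leq\ \frac{L_M\eps}{\Gtauh}.
\]
Multiplying both approximation errors by $\sn\,\Htauh\,\Gtauh^{\,2}/B$ and using the elementary bound $\Htauh\Gtauh^{\,2}+\Htauh\Gtauh\leq 2$ (both factors are probabilities), the discretization adds at most $2\sn L_M\eps/B$. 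Combining with the net bound, invoking the oracle factor of $2$, and dividing by $\sn$ reproduces exactly the stated right-hand side.

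The main obstacle is the IPCW step above: unlike the sample mean, the estimator amplifies uniform approximation errors by the random weights $1/\hatG(T_j-)$, and it is precisely the need to absorb this amplification by the deterministic lower bound $\Gtauh$ that forces the extra power of $\Gtauh$ in the normalization $\Htauh\Gtauh^{\,2}/B$ on the left-hand side, and that dictates the coupling between the discretization scale $\eps$ and the censoring level through the factor $4\eps L_M/B$ rather than a quantity tending to zero faster in $n$.
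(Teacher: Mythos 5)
Your proposal is correct and follows essentially the same route as the paper's proof: the same oracle decomposition yielding the factor $2\sup_{f\in\F}|\hat\mu_n(L\circ f)-\mu(L\circ f)|$, the same sup-norm $\eps$-net with the Lipschitz condition controlling both the population term by $L_M\eps$ and the IPCW term by $L_M\eps/\Gtauh$ (absorbed by the extra power of $\Gtauh$ in the normalization), and the same application of Theorem~\ref{thm:hoeffding_empirical} via a union bound with the shift $\eta\mapsto\eta+\log\mathcal{N}(\F,\|\cdot\|_\infty,\eps)$.
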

See proof in Appendix~\ref{subsec:proof_erm}. As an immediate corollary we obtain that the minimizer of the empirical risk $f_n^C$ is consistent.
\begin{cor}
	Assume (A\ref{as:independentCT})--(A\ref{as:no_simultanious_failure}) and that~\eqref{eq:covering_num} holds, then
	$$	P\left( R_L(f_n^C)>\inf_{f\in\F}R_L(f)+\eps\right)\rightarrow 0\,.
	$$
\end{cor}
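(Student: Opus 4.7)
The strategy is to feed the covering-number bound \eqref{eq:covering_num} into the oracle inequality of Lemma~\ref{lem:erm} and then choose the confidence and resolution parameters to drive every term to zero. Given an arbitrary $\eps>0$, I would introduce two auxiliary sequences: a resolution $\eps_n\downarrow 0$ (used in place of the $\eps$ appearing inside $\mathcal{N}(\F,\|\cdot\|_{\infty},\cdot)$ in the lemma), and a confidence level $\eta_n\uparrow\infty$. Substituting \eqref{eq:covering_num} into Lemma~\ref{lem:erm} gives, with probability at least $1-\tfrac{11}{2}e^{-\eta_n}$,
\[
\frac{\Htauh\,\Gtauh^{\,2}}{B}\bigl(R_L(f_n^C)-\inf_{f\in\F}R_L(f)\bigr)\le\frac{8\sqrt{2\eta_n+2a\eps_n^{-p}}+6D_o}{\sn}+\frac{4\eps_n L_M}{B}.
\]
Taking, for example, $\eta_n=\log n$ and $\eps_n=n^{-1/(2p)}$ forces both summands on the right-hand side to zero (the first is $O(n^{-1/4})$, the second is $O(n^{-1/(2p)})$), while $\tfrac{11}{2}e^{-\eta_n}=\tfrac{11}{2n}\to 0$.

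The second step is to remove the random prefactor $\Htauh\,\Gtauh^{\,2}$. Under (A\ref{as:independentCT})--(A\ref{as:positiveRisk}) one has $\Htau=\Stau\Gtau>0$, and in particular $\Gtau>0$. Consistency of the Kaplan--Meier estimator (as used in Theorem~\ref{thm:asymptotic}) together with the strong law of large numbers yields $\Gtauh\asarrow\Gtau$ and $\Htauh\asarrow\Htau$, so that $\Htauh\,\Gtauh^{\,2}\asarrow\Htau\,\Gtau^{\,2}>0$. Consequently the event $B_n\equiv\{\Htauh\,\Gtauh^{\,2}\ge\tfrac12\Htau\Gtau^{\,2}\}$ satisfies $P(B_n^c)\to 0$.

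Intersecting $B_n$ with the good event of Lemma~\ref{lem:erm}, the displayed oracle inequality implies that $R_L(f_n^C)-\inf_{f\in\F}R_L(f)$ is bounded by a deterministic quantity of order $n^{-1/4}\vee n^{-1/(2p)}$, which for the above choices of $\eta_n$ and $\eps_n$ is eventually less than $\eps$. A union bound then delivers
\[
P\bigl(R_L(f_n^C)>\inf_{f\in\F}R_L(f)+\eps\bigr)\le\tfrac{11}{2}e^{-\eta_n}+P(B_n^c)+o(1)\to 0,
\]
as required. There is no serious obstacle: the only delicate point is rate balancing, since $\eps_n$ must tend to zero yet satisfy $\eps_n\gg n^{-1/p}$ (to keep $a\eps_n^{-p}/n$ negligible), while $\eta_n$ must diverge with $\eta_n=o(n)$; the choice above is one convenient solution and many others work equally well.
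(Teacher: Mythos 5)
Your argument is correct and is essentially the proof the paper intends: the corollary is stated as an immediate consequence of Lemma~\ref{lem:erm}, and your substitution of the covering bound~\eqref{eq:covering_num} with $\eta_n\to\infty$, $\eps_n\to 0$ chosen so that both terms of the oracle inequality vanish, together with removing the random prefactor $\Htauh\hat G_{\tau}^2$ via consistency of $\Htauh$ and the Kaplan--Meier estimator (guaranteed by $\Htau>0$ under (A\ref{as:positiveRisk})), simply spells out the details the paper leaves implicit. No gaps.
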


\section{Concluding Remarks}\label{sec:summary}
We presented Hoeffding-type and Bernstein-type inequalities for right-censored data. While the inequalities are not sharp, they provide a simple tool for developing and analyzing machine learning algorithms for right-censored data. 

Future research questions include sharpening the inequalities obtained in this work (see discussion in Section~\ref{subsec:discussion}). Another research direction is relaxation  of the assumption that the failure and censoring are independent. This can be done, for example, by assuming a model on the censoring distribution, or by using generalized Kaplan-Meier estimators~\citep{Dabrowska87,Dabrowska89}. A third research question is how to derive other concentration inequalities such as Talagrand's inequality.


\appendix
\section{Proofs}\label{sec:proofs}

\subsection{Proof of Theorem~\ref{thm:asymptotic}}\label{subsec:asymptotic}
Write
\begin{align*}
\hat{\mu}_n(f) - \mu(f) = &(\ep_n-P)\left(\frac{\delta f(T,Z)}{    \hatG(T-)}-\frac{\delta f(T,Z)}{    G(T-)}\right)
+(\ep_n-P)\frac{\delta f(T,Z)}{    G(T-)}
+P\left(\frac{\delta f(T,Z)}{    \hatG(T-)}-\frac{\delta f(T,Z)}{    G(T-)}\right)\\
=&A_n+B_n+C_n\,.
\end{align*}
Note that
\begin{align}\label{eq:AofConsistency}
|A_n|\leq \sup_{t\in[0,\tau)}|G(t)-\hatG(t) | \,\left|(\ep_n-P)\frac{\delta f(T,Z)}{   G(T-) \hatG(T-)}\right|\,,
\end{align}
which converges to zero by the consistency of the Kaplan-Meier estimator and Assumption~(A\ref{as:positiveRisk}). A similar argument shows that $C_n$ converges to zero. Finally, $B_n$ converges to zero by the law of large numbers, which complete the proof of consistency.

For the normality part, we follow \citet{Bang2000}. Let the filtration $\mathcal{G}(t)$ be the set of $\sigma$-algebras generated by 
\begin{align*}
\Big\{ \indi{C_i\leq t}, \indi{T_i\leq t},Z_i;\, t\in[0,\tau]\,,i\in 1,\ldots,n\Big\}\,.
\end{align*}
Consider the martingale $M_i^C(t)= N_i^C(t)-\int_0^t Y_i(s)d\Lambda^C(s)$, where $\Lambda^C$ is the cumulative hazard function for the censored distribution, 
$N_i^C(t)=\indi{U_i\leq t,\delta=0}$ and
$Y_i(t)=\indi{U\geq t}$. Note that $N_i^C(t)$ is the counting process for the censoring, and not for the failure events, and by Assumption~(A\ref{as:no_simultanious_failure}), $Y_i(t)$ is the at-risk process for observing a censoring time.  Define $M^C(t)=\sum M_i^C(t)$, $N^C(t)=\sum N_i^C(t)$, and $Y(t)=\sum Y_i(t)$. Using identity~\eqref{eq:3.10.d} below,	the martingale integral representation \citep[][p.~37]{gill1980censoring}
\begin{align*}
\frac{\hatG(t)-G(t)}{G(t)}=-\int_{0}^{t}  \frac{\hatG(u-)dM^C(u)}{G(u)Y(u)}\,,
\end{align*}
and the relationship 
\begin{align}\label{eq:identity}
n^{-1}Y(t)=\hatG(t-)\hat{S}(t-)\,,
\end{align}
where $\hat{S}$ is the Kaplan-Meier estimator of $S$, we can write
\begin{align}\label{eq:diff_mun_mu}
\begin{split}
&  \sn (\hat{\mu}_n(f)-\mu(f) )
\\
&\quad =n^{-1/2}\sum_{i=1}^{n}\frac{\delta_i f(T_i,Z_i)}{G(T_i)}- 
n^{-1/2}\sum_{i=1}^{n}\frac{\delta_i f(T_i,Z_i)}{\hatG(T_i)}\frac{\hatG(T_i)-G(T_i)}{G(T_i)}
-n^{1/2}\mu(f) 
\\
&\quad =n^{-1/2}\sum_{i=1}^{n}\left(f(T_i,Z_i)-\mu(f) \right)\\
&\qquad -n^{-1/2}\sum_{i=1}^n\int_0^\tau f(s,Z_i)\frac{dM_i(s)}{G(s)}+n^{-1/2} \frac{1}{n}\sum_{i=1}^{n}\frac{\delta_i f(T_i,Z_i)}{\hatG(T_i)}\int_0^\tau\frac{\indi{T_i\geq s}}{\hat{S}(s-)G(s)}dM^C(s)
\\
&\quad =n^{-1/2}\sum_{i=1}^n \left(f(T_i,Z_i)-\mu(f) \right)
\\
&\qquad-n^{-1/2}\sum_{i=1}^n\int_0^\tau \left(f(s,Z_i)-\frac{P[f(T,Z)\indi{T\geq s}]}{S(s)}\right)\frac{dM_i(s)}{G(s)}\\
&\qquad+n^{-1/2} \int_0^\tau\left[\frac{1}{n}\sum_{i=1}^{n}\left(\frac{\delta_i f(T_i,Z_i)}{\hatG(T_i)}\frac{\indi{T_i\geq s}}{\hat{S}(s-)}\right)- \frac{P[f(T,Z)\indi{T\geq s}]}{S(s-)}\right]\frac{dM^C(s)}{G(s)}
\\
&	
\quad \equiv A_n-B_n+C_n\,.
\end{split}
\end{align}
Note that $C_n$ in the above equation is $o_P(1)$. Since $A_n$ in~\eqref{eq:diff_mun_mu} is $\mathcal{G}(0)$-predictable and $B_n$ is a martingale, $A_n$ and $B_n$ are uncorrelated sums of i.i.d.\ random variables. The variance of $A_n$ is $\mathrm{Var}(f(T,Z))$. By Theorem~2.6.2 of~\citet{FH} and the central limit theorem, 
\begin{align*}
&  \mathrm{Var}\left(\int_0^\tau \left(f(s,Z)-\frac{P[f(T,Z) \indi{T\geq s}]}{S(s-)}\right)\frac{dM(s)}{G(s)}\right)\\
&\quad=P\left[\int_0^\tau\left(f(s,Z)-\frac{P[f(T,Z) \indi{T\geq s}]}{S(s-)}\right)^2\indi{T\geq s}\indi{C\geq s}\frac{(1-\Delta\Lambda(s)))d\Lambda^C(s)}{G^2(s)}\right]\,,
\end{align*}
where for a function $F$, $\Delta F (s)=F(s)-F (s-)$. Taking the conditional expectation with respect to $C$ and noting that (see Eq.~\ref{eq:int_rep})
\begin{align*}
G(s)=G(s-)(1-\Delta\Lambda(s))
\end{align*} completes the normality part.

We now show that   $
\hat{\mu}_n(f) = \int_0^\tau f(t) d\hat F_n(t)$ when $f$ is a function only of $T$. First, note that the Kaplan-Meier estimator $\hat F_n$ has jumps only at time-points in which there are uncensored observations~\citep{Efron1967}. Let $t^*_j$, $j=1,\ldots,m$ be the ordered times in which there are uncensored observations and let $n^*_j$ be the number of failures that occurred at time $t^*_j$. Then
\begin{align*}
\int_0^\tau f(t) d\hat F_n(t)= \sum_{j=1}^m f(t^*_j) \Delta \hat F_n(t^*_j)\,.
\end{align*}
By the product-limit representation of the Kaplan-Meier estimator \citep[see][Eq.~6]{Satten01}
\begin{align*}
\Delta \hat F_n(t^*_j)= \hat S_n(t^*_j-)\frac{n^*_j}{Y(t^*_j)}\,,
\end{align*}
which by~\eqref{eq:identity} can be written as
\begin{align*}
\Delta \hat F_n(t^*_j)=\frac{1}{n}\frac{n^*_j}{\hatG(t^*_j)}\,.
\end{align*}
Hence
\begin{align}\label{eq:km-rep}
\int_0^\tau f(t) d\hat F_n(t)= \frac{1}{n}\sum_{j=1}^m f(t^*_j)\frac{n^*_j}{\hatG(t^*_j)}=\frac{1}{n}\sum_{i=1}^n \frac{\delta_if(T_i)}{\hatG(T_i)}
=\hat{\mu}_n(f)
\end{align}
by Assumption~(A\ref{as:no_simultanious_failure}).

To see that $\hat{\mu}_n(f)$ is an efficient estimator for $\mu(f)$, note that by~\eqref{eq:km-rep}, $\hat{\mu}_n(f)$ is a functional of the Kaplan-Meier estimator. Since the Kaplan-Meier estimator is efficient~\citep[][Chapter~4.3]{Kosorok08}, and the functional $\phi:F\mapsto\int f(t)dF(t)$ is Hadamard differentiable, \citep[][Lemma~12.3]{Kosorok08}, the result follows from Theorem~18.6 of \citet{Kosorok08}. 

\subsection{Proof of an Identity}
We prove identity 3.10.d in~\citet[][p.~313]{robins1992recovery} for general distributions. 
\begin{lem}
	Let $G$, the cumulative distribution function of $ C$, be right-continuous and differentiable except at
	points in a countably infinite set. Then
	\begin{align}\label{eq:3.10.d}
	\frac{\delta_i}{G(T_i)}=1-\int \frac{dM_i^C(t)}{G(t)}\,.
	\end{align}
\end{lem}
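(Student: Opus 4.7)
The plan is to reduce the identity to a simple deterministic fact about $G$ and $\Lambda^C$, and then verify both sides by case analysis on $\delta_i$. The key auxiliary identity is
\begin{align*}
\frac{1}{G(t)} \;=\; 1 + \int_{(0,t]} \frac{d\Lambda^C(s)}{G(s)}\,,
\end{align*}
which I would establish first. Since $G$ is right-continuous with $G(0)=1$ and is differentiable except on a countable set, the Volterra relation $dG(s) = -G(s-)\,d\Lambda^C(s)$ holds in the Lebesgue--Stieltjes sense. Combined with the jump-aware chain rule for $s \mapsto 1/G(s)$,
\begin{align*}
d\!\left(\frac{1}{G(s)}\right) \;=\; -\frac{dG(s)}{G(s-)G(s)} \;=\; \frac{d\Lambda^C(s)}{G(s)}\,,
\end{align*}
integrating from $0$ to $t$ and using $G(0)=1$ yields the auxiliary identity.

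Next I would decompose the right-hand side of \eqref{eq:3.10.d} using $dM_i^C = dN_i^C - Y_i\,d\Lambda^C$:
\begin{align*}
\int \frac{dM_i^C(s)}{G(s)} \;=\; \int \frac{dN_i^C(s)}{G(s)} - \int \frac{Y_i(s)\,d\Lambda^C(s)}{G(s)}\,,
\end{align*}
and split into two cases. If $\delta_i = 1$, then $T_i \le C_i$, so $U_i = T_i$, $N_i^C \equiv 0$, and $Y_i(s) = \indi{s \le T_i}$. The first integral vanishes, and the auxiliary identity gives $\int Y_i\,d\Lambda^C/G = 1/G(T_i) - 1$. Hence the RHS of \eqref{eq:3.10.d} equals $1 - 0 + (1/G(T_i)-1) = 1/G(T_i) = \delta_i/G(T_i)$, as needed.

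If $\delta_i = 0$, then by Assumption~(A\ref{as:no_simultanious_failure}) we have $C_i < T_i$ almost surely, so $U_i = C_i$, and $N_i^C$ has a single unit jump at $C_i$, giving $\int dN_i^C(s)/G(s) = 1/G(C_i)$. Also $Y_i(s) = \indi{s\le C_i}$, so $\int Y_i\,d\Lambda^C/G = 1/G(C_i) - 1$ by the auxiliary identity. Therefore the RHS of \eqref{eq:3.10.d} becomes $1 - 1/G(C_i) + (1/G(C_i)-1) = 0 = \delta_i/G(T_i)$. The only real subtlety is Step~1: one must be careful that the chain-rule computation for $d(1/G)$ handles the jumps of $G$ correctly, and the hypothesis that $G$ is right-continuous with only countably many discontinuities is precisely what makes the Lebesgue--Stieltjes manipulations go through without extra error terms.
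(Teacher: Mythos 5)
Your proof is correct and, at the skeleton level, it is the same argument as the paper's: both rest on the auxiliary identity $1/G(t)=1/G(0)+\int_{(0,t]}d\Lambda^C(s)/G(s)$, evaluated at $t=U_i$ and rearranged; your case analysis on $\delta_i$ is exactly the paper's one-line rearrangement (using $\int dN_i^C(s)/G(s)=(1-\delta_i)/G(U_i)$) written out explicitly. Where you genuinely differ is in how the auxiliary identity is established: the paper proves it by hand, splitting $[u,t]$ at the points of non-differentiability, integrating the absolutely continuous part directly, handling the jump terms via the product-integral representation (giving $\Delta\Lambda(t_i)/G(t_i)=1/G(t_i)-1/G(t_i-)$), and then passing from finitely to countably many such points by a limiting argument; you instead invoke the Volterra relation $dG(s)=-G(s-)\,d\Lambda^C(s)$ together with the finite-variation chain rule $d(1/G)(s)=-dG(s)/\{G(s-)G(s)\}$. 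Your route is valid and cleaner — it treats countably many jumps in one stroke and avoids the approximation step — but it outsources the only nontrivial bookkeeping to a standard Stieltjes-calculus fact; to make it self-contained you should at least verify the jump case, $\Delta(1/G)(s)=-\Delta G(s)/\{G(s)G(s-)\}=\Delta\Lambda^C(s)/G(s)$, which is precisely what the paper's product-integral computation does. Two minor remarks: Assumption~(A\ref{as:no_simultanious_failure}) is not needed in your $\delta_i=0$ case, since $\delta_i=\indi{T_i\leq C_i}=0$ already forces $C_i<T_i$; and your use of $G(0)=1$ (i.e., $P(C>0)=1$) is an implicit assumption, but the paper's proof makes the same one when it replaces $1/G(0)$ by $1$.
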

\begin{proof}
	First, we show that 
	\begin{align}\label{eq:3.10.a}
	\frac{1}{G(t)}= \frac{1}{G(u)}+\int_u^t \frac{d\Lambda(s)}{G(s)}\,.
	\end{align}
	We first assume that $G$ that is differentiable on the segment $[u,\tau]$  except at a finite set of points $u=t_0< t_1<\ldots<t_m=\tau$. Denote by $g$ the derivative of $1-G$ at all points in which it exists. Let $\lambda(s)=g(s)/G(s-)$. Then, by Appendix~A.1 of \citet{FH}, we can write~\eqref{eq:3.10.a} as
	\begin{align*}
	\frac{1}{G(t)}= \frac{1}{G(u)}+\int_u^t \frac{\lambda(s)ds}{G(s)}+\sum_{i=1}^m \frac{\Delta\Lambda(t_i)}{G(t_i)}=\sum_{i=1}^{ m}\left(\int_{t_{i-1}}^{t_i}\frac{\lambda(s)ds}{G(s)}+\frac{\Delta\Lambda(t_i)}{G(t_i)}\right)\,.
	\end{align*}
		Using integration, it is easy to show that \citep[ Eq.~3.10.a]{robins1992recovery}
	\begin{align*}
	\int_{t_{i-1}}^{t_i}\frac{\lambda(s)ds}{G(s)}=\frac{1}{G(t_i-)}-\frac{1}{G(t_{i-1})}\,.
	\end{align*}
	Using the product integral representation \citep[Chapter~12.2.3]{Kosorok08}, we can write
	\begin{align*}
	G(t_i)= e^{-\int_0^{t_i}\lambda(s)ds }\prod_{t_j:j\leq i}\left\{1-\Delta\Lambda(t_j)\right\}\,.
	\end{align*}
	Hence,
	\begin{align}\label{eq:int_rep}
	\begin{split}
	\frac{\Delta\Lambda(t_i)}{G(t_i)}&=\frac{\Delta\Lambda(t_i)}{e^{-\int_0^{t_i}\lambda(s)ds }\prod_{t_j:j\leq i}\left(1-\Delta\Lambda(t_j)\right)}\\
	&=\frac{1-(1-\Delta\Lambda(t_i))}{\left\{e^{-\int_0^{t_i}\lambda(s)ds }\prod_{t_j:j< i}\left(1-\Delta\Lambda(t_j)\right)\right\}\left(1-\Delta\Lambda(t_i)\right)}=\frac{1}{G(t_i)}-\frac{1}{G(t_i-)}\,.
	\end{split}
	\end{align}
	Thus, we conclude that
	\begin{align*}
	\int_{t_{i-1}}^{t_i}\frac{\lambda(s)ds}{G(s)}+\frac{\Delta\Lambda(t_i)}{G(t_i)}= \frac{1}{G(t_i)}-\frac{1}{G(t_{i-1})}\,,
	\end{align*}
	which proves~\eqref{eq:3.10.a} when $G$ is differentiable except at a finite set of points. The result also follows when the set at which $G$ is not differentiable is countable by bounding $G$ from below and from above by functions that have finite sets of non-differentiability, and taking the limit.
	
	Write
	\begin{align*}
	\frac{\delta_i}{G(U_i)}+\frac{1-\delta_i}{G(U_i)}=\frac{1}{G(U_i)}= \frac{1}{G(0)}+\int_0^{U_i} \frac{d\Lambda(s)}{G(s)}\,.
	\end{align*}
	Hence,
	\begin{align*}
	\frac{\delta_i}{G(U_i)}=1 - \left(\frac{1-\delta_i}{G(U_i)}-\int\frac{\indi{U_i\geq s}d\Lambda(s)}{G(s)}\right)=1-\int\frac{dM_i^C(s)}{G(s)}\,.
	\end{align*}
	The result follows since $\delta_i/G(U_i)=\delta_i/G(T_i)$.
\end{proof}

\subsection{Proof of Theorem~\ref{thm:hoeffding}}\label{subsec:proof_hoeffding}
Define
\begin{align*}
\Omega_n=\left\{\Stau \sup_{0< t\leq \tau}| \hatG(t-)- G(t-)|\leq \frac{\sqrt{\eta/2}+D_o/2}{\sn}\right\}\,,
\end{align*}
and note that by~\eqref{eq:KM_exp_bound},
\begin{align}\label{eq:prob_omegan_single}
P(\Omega_n) >1-
\frac{5}{2}e^{-\eta}\,.
\end{align}

Write
\begin{align}\label{eq:bound_by_AnBn}
\left|\hat{\mu}_n(f)-\mu(f) \right|\leq & \left|\ep_n \frac{\delta f(T,Z)}{    \hatG(T-)}-\ep_n \frac{\delta f(T,Z)}{    G(T-)}\right|
+\left|\ep_n \frac{\delta f(T,Z)}{    G(T-)}-P \frac{\delta f(T,Z)}{    G(T-)}\right|\equiv A_n(f)+B_n(f)\,,
\end{align}
where we use the fact $\mu(f) =P[\delta f(T,Z)/    G(T-)]$, which follows from Assumption~(A\ref{as:independentCT}) by conditional expectations. Then
\begin{align}\label{eq:main_single}
\begin{split}
&P\left(\frac{\sn \Htau\Gtauh}{M}\left|\hat{\mu}_n(f)-\mu(f) \right|\geq 3\sqrt{\frac{\eta}{2}} +\frac{D_o}{2}\right)
\\
&
\leq
P\left(\left.\frac{\sn \Htau\Gtauh}{M}A_n(f)\geq \sqrt{\frac{\eta}{2}} +\frac{D_o}{2}\right|\Omega_n\right)P(\Omega_n)
+P\left(\left.\frac{\sn \Htau\Gtauh}{M}B_n(f)\geq\Stau\sqrt{2\eta}\right|\Omega_n\right)P(\Omega_n)+P(\Omega_n^C)\,.
\end{split}\end{align}

We start by bounding the first expression. Note that
\begin{align}\label{eq:bound_A_n}
A_n(f)&\leq \ep_n  \left|\frac{\delta f(T,Z)}{G(T-)\hatG(T-)}\right|
|G(T-)-\hatG(T-)|
\leq \frac{M}{\Gtauh\Gtau}\|G-\hatG\|_{\infty}\,.
\end{align}
Hence,
\begin{align}\label{eq:bound_A_n2}
\begin{split}
&P\left(\left.\frac{\sn \Htau\Gtauh}{M}A_n(f)\geq
\sqrt{\frac{\eta}{2}}+\frac{D_o}{2}\right|\Omega_n\right)P(\Omega_n)
\leq   P\left(\left.\sn \Stau \|G-\hatG\|_{\infty}\geq\sqrt{\frac{\eta}{2}} +\frac{D_o}{2} \right|\Omega_n\right)P(\Omega_n)
=0\,,
\end{split}
\end{align}
where the inequality follows from~\eqref{eq:bound_A_n}, and the equality follows from the definition of $\Omega_n$.

Recall that
$B_n(f)\equiv
|(\ep_n -P)\delta f(T,Z)/ G(T-)|$. Hence
\begin{align}\label{eq:bound_B_n}
\begin{split}
&  P\left(\left.\frac{\sn \Gtauh\Htau}{M}B_n(f)\geq\Stau\sqrt{2\eta}\right|\Omega_n\right)P(\Omega_n)
\leq P\left(\frac{\sn \Stau\Gtau}{M}\left|(\ep_n -P)\frac{\delta f(T,Z)}{ G(T-)}\right|\geq\Stau\sqrt{2\eta}\right)\\
&
\leq P\left(\left|(\ep_n -P)\frac{\delta f(T,Z)}{ G(T-)}\right|\geq\frac{M}{\Gtau}\sqrt{\frac{2\eta}{n}}\right)\\
&\leq 2e^{-\eta}\,,
\end{split}
\end{align}
where the last inequality follows from the standard Hoeffding inequality~\eqref{eq:Hoeffding}, and where we used the fact that $|\delta f(T,Z)/ G(T)|<M/\Gtau$.

The result follows by substituting~\eqref{eq:prob_omegan_single},~\eqref{eq:bound_A_n2}, and~\eqref{eq:bound_B_n}, in~\eqref{eq:main_single}.

\subsection{Proof of Corollary~\ref{thm:hoeffding_cor}}\label{subsec:proof_hoeffding_cor}
Define $\Psi_n=\{\Htau \leq (1+\frac{1}{\sqrt{n}})\Htauh\}$ and note that by the multiplicative Chernoff bound \citep[Eq.~6]{Hagerup1990}
\begin{align}\label{eq:apply_multi_ch}
P(\Psi_n)\geq 1- e^{-\Htau/(3n)}\,.
\end{align} 

Write
\begin{align}\label{eq:main_new}
\begin{split}
&P\left(\frac{\sn \Htau^2}{M}\left|\hat{\mu}_n(f)-\mu(f) \right|\geq 3\sqrt{\frac{\eta}{2}} +\frac{D_o}{2}+2\right)
\\
&
\leq P\left(\left.\frac{\sn \Htau^2}{M}\left|\hat{\mu}_n(f)-\mu(f) \right|\geq 3\sqrt{\frac{\eta}{2}} +\frac{D_o}{2}+2\right|\Psi_n\right)P(\Psi_n)+P(\Psi_n^C)
\\
&
\leq P\left(\frac{\sn \Htau\left(1+\frac{1}{\sn}\right)\Htauh)}{M}\left|\hat{\mu}_n(f)-\mu(f) \right|\geq 3\sqrt{\frac{\eta}{2}} +\frac{D_o}{2}+2\right)+P(\Psi_n^C)
\\
&
\leq P\left(\frac{\sn \Htau\Gtauh}{M}\left|\hat{\mu}_n(f)-\mu(f) \right|\geq 3\sqrt{\frac{\eta}{2}} +\frac{D_o}{2}+2- \frac{\Htau\Htauh}{M}\left|\hat{\mu}_n(f)-\mu(f)\right| \right)+P(\Psi_n^C)
\\
&
\leq P\left(\frac{\sn \Htau\Gtauh}{M}\left|\hat{\mu}_n(f)-\mu(f) \right|\geq 3\sqrt{\frac{\eta}{2}} +\frac{D_o}{2}\right)+P(\Psi_n^C)
\end{split}\end{align}
where we used the fact that $\Gtauh\geq \Htauh$ and that $\frac{\Htau\Htauh}{M}\left|\hat{\mu}_n(f)-\mu(f)\right|\leq 2$. By Theorem~\ref{thm:hoeffding} and~\eqref{eq:apply_multi_ch}, the last line of~\eqref{eq:main_new} is bounded by $(9/2) e^{-\eta}+e^{-\Htau/3 n}$, which concludes the proof.

\subsection{Proof of~\eqref{eq:KM_exp_bound}}\label{subsec:proof_of_eq}
	For $\eta>0$, write
	\begin{align}\label{eq:epsilon}
	\begin{split}
	\eps=\sqrt{\frac{\eta}{2}+\left(\frac{D_o}{4}\right)^2}+\frac{D_o}{4}
	\Leftrightarrow  &\eps  -\frac{D_o}{4}=\sqrt{\frac{\eta}{2}+\left(\frac{D_o}{4}\right)^2}\\
	\Leftrightarrow  &\eps^2  -2\eps  \frac{D_o}{4} + \left(\frac{D_o}{4}\right)^2=\frac{\eta}{2}+\left(\frac{D_o}{4}\right)^2\\
	\Leftrightarrow& 2\eps^2 -\eps   D_o= \eta
	\,.
	\end{split}
	\end{align}
	Using the fact that $\sqrt{x+y}\leq\sqrt{x}+\sqrt{y}$ we obtain that
	\begin{align*}
	\eps=\sqrt{\frac{\eta}{2}+\left(\frac{D_o}{4}\right)^2}+\frac{D_o}{4}\leq \sqrt{\frac{\eta}{2}}+\frac{D_o}{4}+\frac{D_o}{4} = \sqrt{\frac{\eta}{2}}+\frac{D_o}{2}\,,
	\end{align*}
	Consequently, for $\eta=2 \eps^2-  D_o\eps$,	
	\begin{align*}
	P\left(\sn\Stau \|\hatG-G\|_{\infty}\geq\sqrt{\frac{\eta}{2}}+\frac{D_o}{2}\right)\leq P\left(\sn\Stau \|\hatG-G\|_{\infty}\geq\eps\right)&\leq\frac{5}{2}e^{-\{2 \eps^2-  D_o\eps\}}\leq\frac{5}{2}e^{-\eta}\,.
	\end{align*}
\subsection{Proof of Lemma~\ref{lem:DKW}}\label{subsec:proof_DKW}
	Define $\Omega_n=\{\Htau+ \eps/(2\sn)\geq\Htauh \} $, and note that by the standard Hoeffeding inequality $P(\Omega_n)>1-e^{-\eps^2/2}$.
	Write 
	\begin{align*}
	P\left(\sqrt{n}\Htauh\|\hatG-G\|_{\infty}\geq\eps\right)
	&\leq P\left(\left.\sqrt{n}\Htauh\|\hatG-G\|_{\infty}\geq\eps\right|\Omega_n\right)P(\Omega_n)+e^{-\eps^2/2}
	\\
	&\leq P\left(\left.\sqrt{n}\left(\Htau+\eps/(2\sn)\right)\|\hatG-G\|_{\infty}\geq\eps\right|\Omega_n\right)P(\Omega_n)+e^{-\eps^2/2}\\
	&\leq P\left(\sqrt{n}\Htau\|\hatG-G\|_{\infty}\geq\eps\bigr(1-\frac{1}{2}\|\hatG-G\|_{\infty}\bigr)\right)+e^{-\eps^2/2}\\
	&\leq P\left(
	\sn  \Stau  \|\hatG-G\|_{\infty}\geq\frac{\eps}{2}\right)+e^{-\eps^2/2}\\
	&\leq e^{-\eps^2/2}\left(1+\frac{5}{2}e^{D_o\eps/2}\right)<\frac{7}{2}e^{-\eps^2/2+D_o\eps/2}\,,
	\end{align*}
	where we used the fact that $\|\hatG-G\|_{\infty}<1$.
	Similarly to~\eqref{eq:epsilon}, for $\eta>0$,
	write
	\begin{align}\label{eq:eps2eta}
	\frac{\eps}{2}=\sqrt{\frac{\eta}{2}+\left(\frac{D_o}{4}\right)^2}+\frac{D_o}{4} \Rightarrow \eta=\frac{\eps^2}{2}- \frac{D_o\eps}{2}\,,
	\end{align}
and note that $\eps\leq \sqrt{2\eta}+D_o$. Thus,
	\begin{align*}
	P\left(\sn\Htauh\|\hatG-G\|_{\infty}\geq\sqrt{2\eta}+D_o\right)
	&\leq  P\left(\sn\Htauh\|\hatG-G\|_{\infty}\geq\eps\right)
	\leq \frac{7}{2}e^{-\eps^2/2+ D_o\eps/2} =\frac{7}{2}e^{-\eta}
	\,,
	\end{align*}
	which completes the proof.

\subsection{Proof of Theorem~\ref{thm:hoeffding_empirical}}\label{subsec:proof_hoeffding_empirical}
	Define
	\begin{align}\label{eq:Delta_n}
	\Delta_n=\left\{\Htauh\sup_{0< t\leq \tau}| \hatG(t-)- G(t-)|\leq \frac{\sqrt{2\eta}+D_o}{\sn}\right\}\,,
	\end{align}
	and note that by Lemma~\ref{lem:DKW},
	\begin{align}\label{eq:prob_omegan}
	P(\Delta_n) \geq 1-
	\frac{7}{2}e^{-\eta}\,.
	\end{align}
	Let
	\begin{align*}
	\Gamma_n=\left\{\Htauh\Gtauh<\Htauh\Gtau+ \frac{\sqrt{2\eta}+D_o}{\sn}\right\}
	\end{align*}
	and note $\Delta_n\subset\Gamma_n$. Write
	\begin{align*}
	&P\left(\frac{\sn\Htauh{\hat G_{\tau}^2}}{M}\left|\hat{\mu}_n(f)-\mu(f) \right|\geq 4\sqrt{2\eta}+3D_o\right)\\
&   \leq
	P\left(\frac{\sn\Htauh{\hat G_{\tau}^2}}{M}\left|\hat{\mu}_n(f)-\mu(f) \right|\geq \right.
	\\
	&\left.\qquad\qquad\qquad\sqrt{2\eta} +D_o+ \frac{\Gtauh\left|\hat{\mu}_n(f)-\mu(f) \right|}{2M}2\left(\sqrt{2\eta} +D_o\right)+\Gtauh\Htauh\sqrt{2\eta}\right)
	\\
	&
	\leq P\left(\frac{\sn\Gtauh}{M}\left(\Htauh\Gtauh-\frac{\sqrt{2\eta} +D_o}{\sn}\right)\left|\hat{\mu}_n(f)-\mu(f) \right|\geq \sqrt{2\eta} +D_o+\Gtauh\Htauh\sqrt{2\eta}\right)\,,
	\end{align*}
	where we used the fact that $\frac{\Gtauh\left|\hat{\mu}_n(f)-\mu(f) \right|}{2M}\leq 1$.
	Using the decomposition~\eqref{eq:bound_by_AnBn}, and conditioning on $\Delta_n$, we have 
	\begin{align}\label{eq:main_empirical}
	\begin{split}
	&
	P\left(\frac{\sn\Gtauh}{M}\left(\Htauh\Gtauh-\frac{\sqrt{2\eta} +D_o}{\sn}\right)\left|\hat{\mu}_n(f)-\mu(f) \right|\geq \sqrt{2\eta} +D_o+\Gtauh\Htauh\sqrt{2\eta}\right)
	\\
	&
	\leq
	P\left(\left.\frac{\sn\Gtauh}{M}\left(\Htauh\Gtauh-\frac{\sqrt{2\eta} +D_o}{\sn}\right)A_n(f)\geq \sqrt{2\eta} +D_o\right|\Delta_n\right)P(\Delta_n)\\
	&\quad+P\left(\left.\frac{\sn\Gtauh}{M}\left(\Htauh\Gtauh-\frac{\sqrt{2\eta} +D_o}{\sn}\right)B_n(f)\geq\Gtauh\Htauh\sqrt{2\eta}\right|\Delta_n\right)P(\Delta_n)\\
	&\quad+P(\Delta_n^C)\,.
	\end{split}
	\end{align}

	We start by bounding the first expression.
	\begin{align}\label{eq:bound_A_n_thm4}
	\begin{split}
	&P\left(\left.\frac{\sn\Gtauh A_n(f)}{M}\left(\Htauh\Gtauh-\frac{\sqrt{2\eta} +D_o}{\sn}\right)\geq
	\sqrt{2\eta}+D_o\right|\Delta_{n}\cap\Gamma_n\right)P(\Delta_n)
	\\
	&\leq   P\left(\left.\frac{\sn\Htauh\Gtauh\Gtau A_n(f)}{M}\geq\sqrt{2\eta} +D_o \right|\Delta_n\right)P(\Delta_n)\\
	&\leq   P\left(\left.\sn\Htauh\|G-\hatG\|_{\infty}\geq\sqrt{2\eta} +D_o \right|\Delta_n\right)P(\Delta_n)=0\,,
	\end{split}
	\end{align}
	where the first inequality follows from the definition of $\Gamma_n$, the second inequality follows from~\eqref{eq:bound_A_n}, and the equality follows from the definition of $\Delta_n$.

	Recall that
	$B_n(f)\equiv
	|(\ep_n -P)\delta f(T,Z)/ G(T-)|$. Hence
	\begin{align*}
	&  P\left(\left.\frac{\sn\Gtauh}{M}\left(\Htauh\Gtauh-\frac{\sqrt{2\eta} +D_o}{\sn}\right)B_n(f)\geq\Gtauh\Htauh\sqrt{2\eta}\right|\Delta_n\right)P(\Delta_n)\\
	&
	\leq P\left(\frac{\sn\Gtauh}{M}\Htauh\Gtau\left|\frac{(\ep_n -P)\delta f(T,Z)}{ G(T-)}\right|\geq\Gtauh\Htauh\sqrt{2\eta}\right)\\
	&
	\leq P\left(\left|(\ep_n -P)\frac{\delta f(T,Z)}{ G(T-)}\right|\geq\frac{M}{\Gtau}\sqrt{\frac{2\eta}{n}}\right)\leq 2e^{-\eta}\,,
	\end{align*}
	where the first inequality follows from the definition of $\Delta_n$, the second inequality follows from~\eqref{eq:bound_A_n}, and the last inequality follows from the standard Hoeffding inequality, where we use the fact that $|\delta f(T,Z)/ G(T-)|\leq M/\Gtau$.
	
	The result follows by substituting~\eqref{eq:prob_omegan} and the two bounds above in~\eqref{eq:main_empirical}.

\subsection{Proof of Theorem~\ref{lem:class_of_funs}}\label{subsec:proof_class}
	We prove the second assertion, the first follows using the same arguments. Using~\eqref{eq:main_empirical} one can show that
	\begin{align}\label{eq:prob_class_3terms}
	\begin{split}
	&P\left(\frac{\sn\Htauh{\hat G_{\tau}^2}}{M}\sup_{f\in\F}\left|\hat{\mu}_n(f)-\mu(f) \right|\geq 4\sqrt{2\eta}+4D_o\right)
	\\
	&
	\quad\leq
	P\left(\left.\frac{\sn\Htauh\Gtau\Gtauh}{M}\sup_{f\in\F}A_n(f)\geq \sqrt{2\eta} +D_o\right|\Delta_n\right)P(\Delta_n)\\
	&\qquad+P\left(\left.\frac{\sn\Htauh\Gtau\Gtauh}{M}\sup_{f\in\F}B_n(f)\geq\Gtauh\Htauh(\sqrt{2\eta}+D_0)\right|\Delta_n\right)P(\Delta_n)\\
	&\qquad+P(\Delta_n^C)\,,
	\end{split}
	\end{align}
	where $A_n$, and $B_n$ are defined in~\eqref{eq:bound_by_AnBn} and $\Delta_n$ is defined in~\eqref{eq:Delta_n}.
	For the first term in the RHS of~\eqref{eq:prob_class_3terms}, note that the bound of the first term in~\eqref{eq:bound_A_n_thm4} holds even when taking the supremum over the class $\F$. For the second term, by Corollary~2 of \citet{Bitouze99}, 
	\begin{align*}
	P\left(\frac{\sn\Gtau\sn}{2M} \sup_{f\in\F} \left|(\ep_n-P)\frac{ \delta f(T,Z)}{G(T-)} \right|\geq\eps\right)\leq \frac{5}{2} e^{-2\eps^2+D_o\eps}\,.
	\end{align*} 
	For $\eta>0$, define $\eps$ as in~\eqref{eq:epsilon}, and note
that $\eps<\sqrt{\eta/2}+D_o/2$.  Hence,
	\begin{align*}
	&P\left(\frac{\sn\Gtau}{M} \sup_{f\in\F} \left|(\ep_n-P)\frac{ \delta f(T,Z)}{G(T-)} \right|\geq\sqrt{2\eta}+D_o\right)
	= P\left(\frac{\sn\Gtau}{2M} \sup_{f\in\F} \left|(\ep_n-P)\frac{ \delta f(T,Z)}{G(T-)} \right|\geq\sqrt{\frac{\eta}{2}}+\frac{D_o}{2}\right) 
	\\
	&\leq P\left(\frac{\Gtau\sn}{2M} \sup_{f\in\F} \left|(\ep_n-P)\frac{ \delta f(T,Z)}{G(T-)} \right|\geq \eps\right)\leq \frac{5}{2} e^{-\eta}\,.
	\end{align*} 
	We obtained a bound for the first and second terms of the RHS of~\eqref{eq:prob_class_3terms}. The bound for the third term is given in~\eqref{eq:prob_omegan}, which concludes the proof.
	
	The following result, which will be needed later, can be obtained using the same arguments applied to the setting of Theorem~\ref{thm:hoeffding}.
\begin{align}\label{eq:bound_functions_thm2}
P\left(\frac{\sn\Htau\Gtauh}{M}\sup_{f\in\F}\left|\hat{\mu}_n(f)-\mu(f) \right|\geq 3\sqrt{\frac{\eta}{2}}+2D_o\right)\leq 5e^{-\eta}\,. 
\end{align}
	

\subsection{Proof of Theorem~\ref{thm:Bernstein}}\label{subsec:Bernstein}
Recall that by~\eqref{eq:diff_mun_mu},
\begin{align*}
&  \sn (\hat{\mu}_n(f)-\mu(f) )=A_n(f)-B_n(f)+C_n(f)\,,
\end{align*}
where
\begin{align*}
A_n(f)&=n^{-1/2}\sum_{i=1}^n \left(f(T_i,Z_i)-\mu(f) \right)\\
B_n(f)&=n^{-1/2}\sum_{i=1}^n\int_0^\tau \left(f(s,Z_i)-\frac{P[f(T,Z)\indi{T\geq s}]}{S(s)}\right)\frac{dM_i(s)}{G(s)}\\
C_n(f)=&n^{-1/2} \int_0^\tau\left[\frac{1}{n}\sum_{i=1}^{n}\left(\frac{\delta_j f(T_i,Z_i)}{\hatG(T_i)}\frac{\indi{T_i\geq s}}{\hat{S}(s-)}\right)- \frac{P[f(T,Z)\indi{T\geq s}]}{S(s-)}\right]\frac{dM^C(s)}{G(s)}\,.
\end{align*}
We would like to bound the expressions $A_n(f)$, $B_n(f)$, and $C_n(f)$. For $A_n(f)$, recall that $|f(t,z)|<M$. For $B_n(f)$,  by~\eqref{eq:identity} and Assumption~(A\ref{as:no_simultanious_failure}) we have
\begin{align}\label{eq:dMtoRV}
n^{-\frac12}\int_0^\tau \frac{dM^C(s)}{G(s)}=n^{-\frac12}\sum_{1=1}^{n}\int_0^\tau \frac{dM_i^C(s)}{G(s)}=n^{-\frac12}\sum_{i=1}^{n}\left(1-\frac{\delta_i}{G(T_i-)}\right)\,.
\end{align}
Using conditional expectations and the fact that $1/G(T_i-)<1/\Gtau$ we obtain that
\begin{align*}
&E\left(1-\frac{\delta_i}{G(T_i-)}\right)=E\left\{\left.E\left(1-\frac{\delta_i}{G(T_i-)}\right|T_i\right)\right\}=0\,,
\\
&\left|1-\frac{\delta_i}{G(T_i-)}\right|\leq \frac{1}{\Gtau}\,.
\end{align*}
Hence, 
\begin{align*}
\left|\int_0^\tau \left(f(s,Z_i)-\frac{P[f(T,Z)\indi{T\geq s}]}{S(s)}\right)\frac{dM_i(s)}{G(s)}\right|\leq\frac{2M}{\Htau}\,.
\end{align*}
Using Bernstein's inequality~\eqref{eq:Bernstein},
\begin{align*}
P\left(|A_n(f)+B_n(f)|\geq \sqrt{2\sigma_f^2\eta}+\frac{2M\eta}{3\sn}+\frac{4M\eta}{3\Htau\sn} \right)\leq
P\left(|A_n(f)+B_n(f)|\geq \sqrt{2\sigma_f^2\eta}+\frac{2M\eta}{\Htau\sn} \right)\leq 2e^{-\eta}\,.
\end{align*}

We now bound $C_n$. Note that
\begin{align*}
|C_n(f)|\leq &n^{-1/2} \sup_{s\in[0,\tau]}\left|\frac{1}{n}\sum_{i=1}^{n}\left(\frac{\delta_j f(T_i,Z_i)}{\hatG(T_i)}\frac{\indi{T_i\geq s}}{\hat{S}(s-)}\right)- \frac{P[f(T,Z)\indi{T\geq s}]}{S(s-)}\right|\left|\int_0^\tau\frac{dM^C(s)}{G(s)}\right| \,.
\end{align*}

Define 
\begin{align*}
f_s(t,z)=f(t,z)\indi{t\geq s}\,.
\end{align*}
Using this notation,
\begin{align*}
&\left|\frac{1}{n}\sum_{i=1}^{n}\left(\frac{\delta_j f(T_i,Z_i)}{\hatG(T_i)}\frac{\indi{T_i\geq s}}{\hat{S}(s-)}\right)- \frac{P[f(T,Z)\indi{T\geq s}]}{S(s)}\right|
=\left| \frac{\hat\mu_n(f_s)}{\hat{S}(s-)}-\frac{\mu(f_s)}{S(s-)}\right|
\\
&
\leq\left|\frac{\hat\mu_n(f_s)}{\hat{S}(s-)}-\frac{\hat\mu_n(f_s)}{S(s-)}\right|+\left|\frac{\hat\mu_n(f_s)}{S(s-)}-\frac{\mu(f_s)}{S(s-)}\right|
\\
&
\leq  \frac{1}{S(s-)}\left(\frac{\left|\hat{S}(s-)-S(s-)\right|}{\Stauh}\frac{2M}{\Gtauh}+\left|\hat\mu_n(f_s)-\mu(f_s)\right|\right)
\\
&
\leq
\frac{1}{\Stau}\left(\frac{2M}{\Htauh}\|\hat{S}-S\|_{\infty}+\left|\hat\mu_n(f_s)-\mu(f_s)\right|\right)\,.
\end{align*}	

By~\eqref{eq:KM_exp_bound}, replacing the roles of $S$ and $G$,
\begin{align*}
&P\left(\frac{\sn}{\Stau}\frac{2M}{\Htauh}\|\hat{S}-S\|_{\infty}\geq\frac{2M}{\Htau\Htauh}\left(\sqrt{\frac{\eta}{2}}+\frac{D_o}{2}\right)\right)
=
P\left(\sn \Gtau\|\hat{S}-S\|_{\infty}\geq\sqrt{\frac{\eta}{2}}+\frac{D_o}{2}\right)
\leq\frac{5}{2}e^{-\eta}
\end{align*}

Define the class of functions $\F=\{f_s: s\in[0,\tau]\}$ and let $\F_1=\{g(t)=\indi{t \geq s}: s\in[0,\tau]\}$. Note that $\F_1$ is a class of monotonic functions and that $\F= f\cdot \F_1$. Using Theorem~9.24 and Lemma~9.25 of \citet{Kosorok08}, we conclude that $
\log\mathcal{N}_{[]}(\eps,\F,L_2(P))<\gamma/\eps$ for some $\gamma>0$. Hence, by~\eqref{eq:bound_functions_thm2},
\begin{align*}
&P\left( \sup_{f_s\in\mathcal{F}}\frac{\sn}  {\Stau}\left|\hat{\mu}_n(f_s)-\mu(f_s) \right|\geq \frac{M}{\Htau\Stau\Gtauh}\left( 3\sqrt{\frac{\eta}{2}} +2D_o\right)\right)
\\
&= P\left( \sup_{f_s\in\mathcal{F}}\frac{\sn  \Htau\Gtauh}{M}\left|\hat{\mu}_n(f_s)-\mu(f_s) \right|\geq 3\sqrt{\frac{\eta}{2}} +2D_o\right)\leq 5e^{-\eta}\,.
\end{align*}
Using~\eqref{eq:dMtoRV} and~\eqref{eq:Hoeffding}, we have
\begin{align*}
P\left(n^{-\frac12}\left|\int_0^\tau \frac{dM^C(s)}{G(s)}\right|\geq \sqrt{\frac{\eta}{2G_{\tau}^2}}\right)\leq 2e^{-\eta}\,.
\end{align*}

Note that for two random variables $A_1$ and $A_2$, and two constants $a_1>0$ and $a_2>0$, we have
\begin{align*}
P(A_1 A_2\geq a_1 a_2)\leq P(A_1 \geq a_1 \cup A_2\geq a_2 )\leq P(A_1 \geq a_1)+P(A_2\geq a_2)\,. 
\end{align*}
Therefore,
\begin{align*}
&	P\left(|C_n(f)|\geq \frac{M}{\sn\Htau\Gtauh}\frac{\Stau+\Stauh}{\Stau\Stauh}\left(3\sqrt{\frac{\eta}{2}}+2D_o\right)\sqrt{\frac{2\eta}{G_{\tau}^2}}\right)
\\
&\leq P\left(\frac{\sn}{\Stau}\left(\frac{2M}{\Htauh}\|\hat{S}-S\|_{\infty}+\left|\hat\mu_n(f_s)-\mu(f_s)\right|\right)
\geq \frac{M}{\Htau\Gtauh}\left(\frac{1}{\Stauh}+\frac{1}{\Stau}\right)\left(3\sqrt{\frac{\eta}{2}}+2D_o\right)\right)
\\
&\quad+ P\left(n^{-\frac12}\left|\int_0^\tau \frac{dM^C(s)}{G(s)}\right|\geq \sqrt{\frac{2\eta}{G_{\tau}^2}}\right)
\\
&\leq P\left(\frac{\sn}{\Stau}\frac{2M}{\Htauh}\|\hat{S}-S\|_{\infty}\geq\frac{2M}{\Htau\Htauh}\left(\sqrt{\frac{\eta}{2}}+\frac{D_o}{2}\right)\right)
\\
&\quad+P\left( \sup_{f_s\in\mathcal{F}}\frac{\sn}  {\Stau}\left|\hat{\mu}_n(f_s)-\mu(f_s) \right|\geq \frac{M}{\Htau\Stau\Gtauh}\left( 3\sqrt{\frac{\eta}{2}} +2D_o\right)\right)
+2e^{-\eta}\leq \frac{19}{2}e^{-\eta}\,.
\end{align*}
Summarizing,
\begin{align*}
&P\left( |\hat{\mu}_n(f)-\mu(f) |\geq  \sqrt{\frac{2\sigma_f^2\eta}{n}}+\frac{2M\eta}{\Htau n}+
\frac{M}{n}\frac{\Stau+\Stauh}{H_{\tau}^2\Htauh}\left(3\sqrt{\frac{\eta}{2}}+2D_o\right)\sqrt{2\eta}
\right)
\\
&\leq P\left(|A_n(f)+B_n(f)|\geq \sqrt{2\sigma_f^2\eta}+\frac{2M\eta}{\Htau\sn}\right)
+P\left(|C_n(f)|\geq  
\frac{M}{\sn\Htau\Gtauh}\frac{\Stau+\Stauh}{\Stau\Stauh}\left(3\sqrt{\frac{\eta}{2}}+2D_o\right)\sqrt{\frac{2\eta}{G_{\tau}^2}}\right)
\\
&\leq \frac{23}{2} e^{-\eta}\,.
\end{align*}

\subsection{Proof of Corollary~\ref{cor:Bernstein}}\label{subsec:proof_cor_Bernstein}
From the proof of Theorem~\ref{thm:Bernstein}, we can conclude that
\begin{align}\label{eq:bound_cor}
\begin{split}
&P\left( |\hat{\mu}_n(f)-\mu(f) |\geq  \sqrt{\frac{2\sigma_f^2\eta}{n}}+\frac{2M\eta}{\Htau n}+
\frac{2M}{nH_{\tau}^3}\left(3\sqrt{\frac{\eta}{2}}+2D_o+3\right)\sqrt{2\eta}
\right)
\\
&\leq P\left(\frac{\sn}{\Stau}\frac{2M}{\Htauh}\|\hat{S}-S\|_{\infty}\geq \frac{2M}{\Htau^2\Stau}\left(\sqrt{\frac{\eta}{2}}+\frac{D_o}{2}+2\right)\right)
\\
&\quad+P\left( \sup_{f_s\in\mathcal{F}}\frac{\sn}  {\Stau}\left|\hat{\mu}_n(f_s)-\mu(f_s) \right|\geq \frac{M}{\Htau^2\Stau}\left( 3\sqrt{\frac{\eta}{2}} +2D_o+2\right)\right)+4 e^{-\eta}\,,
\end{split}
\end{align}
where the $4 e^{-\eta}$ term is obtained from the bounds on $|A_n(f)+B_n(f)|$ and $n^{-1/2}\left|\int_0^\tau dM^C(s)/G(s)\right|$.
We start by bounding the first expression on the RHS of~\eqref{eq:bound_cor}. For $n\geq 2$, define  $\Xi_n=\{(1+ 1/(\sn-1))\Htauh\geq \Htau\}$\,, and note that by the multiplicative Chernoff inequality~\citep[Eq.~7]{Hagerup1990}
\begin{align*}
&P(\Xi_n) =P\left( \left(1+ \frac{1}{\sn-1}\right)\Htauh\geq\Htau\right) =P\left(\Htauh \geq \left(1-\frac{1}{\sn}\right)\Htau\right)\leq 1- e^{-\Htau/(2n)}\,.
\end{align*}
Hence,
\begin{align}\label{eq:first_bound_cor}
\begin{split}
&P\left(\frac{\sn}{\Stau}\frac{2M}{\Htauh}\|\hat{S}-S\|_{\infty}\geq\frac{2M}{\Htau^2\Stau}\left(\sqrt{\frac{\eta}{2}}+\frac{D_o}{2}+2\right)\right)
\\
&\leq P\left(\left.\sn\Gtau\frac{\Htau}{\Htauh}\|\hat{S}-S\|_{\infty}\geq\sqrt{\frac{\eta}{2}}+\frac{D_o}{2}+2\right|\Xi_n\right)P(\Xi_n)+P(\Xi_n^C)
\\
&\leq 
P\left(\left.\sn\Gtau\left(1+\frac{1}{\sn-1}\right)\|\hat{S}-S\|_{\infty}\geq\sqrt{\frac{\eta}{2}}+\frac{D_o}{2}+2\right|\Xi_n\right)P(\Xi_n)+P(\Xi_n^C)
\\
&\leq
P\left(\sn\Gtau\|\hat{S}-S\|_{\infty}\geq\sqrt{\frac{\eta}{2}}+\frac{D_o}{2}+2-\frac{\sn}{\sn-1}\Gtau\|\hat{S}-S\|_{\infty}\right)+e^{-\Htau/(2n)}
\\
&\leq
P\left(\sn\Gtau\|\hat{S}-S\|_{\infty}\geq\sqrt{\frac{\eta}{2}}+\frac{D_o}{2}\right)+e^{-\Htau/(2n)}\leq \frac{5}{2}e^{-\eta}+e^{-\Htau/(2n)}\,.
\end{split}
\end{align}

For the second expression on the RHS of~\eqref{eq:bound_cor}, by Corollary~\ref{thm:hoeffding_cor},
\begin{align}\label{eq:sec_bound_cor}
\begin{split}
&P\left( \sup_{f_s\in\mathcal{F}}\frac{\sn}  {\Stau}\left|\hat{\mu}_n(f_s)-\mu(f_s) \right|\geq \frac{M}{\Htau^2\Stau}\left( 3\sqrt{\frac{\eta}{2}} +D_o+2\right)\right)
\\
&= P\left( \sup_{f_s\in\mathcal{F}}\frac{\sn H_{\tau}^2}  {M}\left|\hat{\mu}_n(f_s)-\mu(f_s) \right|\geq   3\sqrt{\frac{\eta}{2}} +D_o+2\right)\leq  5e^{-\eta}+e^{-\Htau/(3n)}\,.
\end{split}
\end{align}
The result follows by substituting~\eqref{eq:first_bound_cor} and~\eqref{eq:sec_bound_cor} in~\eqref{eq:bound_cor}.

\subsection{Proof of Lemma~\ref{lem:erm}}\label{subsec:proof_erm}
	Let $f_0=\argmin_{f\in\F}R_L(f) $, which exists since $R_L$ is continuous by Lemma~2.19 of \citet{SVR}, and $\F$ is compact. Note that
	\begin{align}\label{eq:diff_risk1}
	\begin{split}
	R_L(f_n^C)-R_L(f_0)\leq &R_L(f_n^C)-\hat{\mu}_n(L\circ f_n^C(T,Z) )  +\hat{\mu}_n(L\circ f_0)-R_L(f_0)
	\\
	\leq& 2\sup_{f\in\F}|\hat{\mu}_n(L\circ f)-\mu(L\circ f)|\,.
	\end{split}
	\end{align}	
	Since $\mathcal{N}(\F,\|\cdot \|_{\infty},\eps)<\infty$, there is a finite set of functions $\F_\eps\subset F$ such that for every $f\in\F$, there is a function $f^*\in\F_\eps$ for which $\|f-f^*\|\leq \eps$. Write 
	\begin{align}\label{eq:diff_risk2}
	\begin{split}
	\left|\hat{\mu}_n(L\circ f)-\mu(L\circ f)\right|
	\leq& \left|\hat{\mu}_n(L\circ f)-\hat{\mu}_n(L\circ f^*)\right|+
	\left|\hat{\mu}_n(L\circ f^*)-\mu(L\circ f^*)\right|+\left|\mu(L\circ f^*)-\mu(L\circ f)\right|\\
	\leq&  \frac{L_M\eps}{\Gtauh}+\left|\hat{\mu}_n(L\circ f^*)-\mu(L\circ f^*)\right|+L_M\eps\,,
	\end{split}
	\end{align}
	where we used the locally Lipschitz continuity part of~\eqref{eq:Lipchitz}. Substituting~\eqref{eq:diff_risk2} in~\eqref{eq:diff_risk1}, we obtain
	\begin{align*}
	R_L(f_n^C)-R_L(f_0)\leq \frac{2L_M\eps}{\Gtauh}+2\sup_{f^*\in\F_\eps}\left|\hat{\mu}_n(L\circ f^*)-\mu(L\circ f^*)\right|+2L_M\eps\,.
	\end{align*}
	Recall that by~\eqref{eq:Lipchitz}, $L\circ f\leq B$. Using the union bound~\eqref{eq:union_bound}, 
	\begin{align*}
	&P\left(\frac{\Htauh{\hat G_{\tau}^2}}{B}\left(R_L(f_n^C)-R_L(f_0)\right)\geq\frac{8\sqrt{2\eta^*}+6D_o}{\sn}+\frac{4\eps L_M}{B}\right)
	\\
	&\leq P\left(\frac{\Htauh{\hat G_{\tau}^2}}{B}\left(\frac{2L_M\eps}{\Gtauh}+2\sup_{f^*\in\F_\eps}\left|\hat{\mu}_n(L\circ f^*)-\mu(L\circ f^*)\right|+2L_M\eps\right)\geq\frac{8\sqrt{2\eta^*}+6D_o}{\sn}+\frac{4\eps L_M}{B}\right)
	\\
	&\leq
	P\left(\sup_{f\in\F_{\eps}}\frac{\Htauh{\hat G_{\tau}^2}}{B}\left|\hat{\mu}_n(L\circ f^*)-\mu(L\circ f^*)\right|
	\geq \frac{4\sqrt{2\eta^*}+3D_o}{\sn}\right)\leq \frac{11}{2}\mathcal{N}(\F,\|\cdot \|_{\infty},\eps) e^{-\eta^*}\,.
	\end{align*}
	The result follows by substituting $\eta=\eta^*-\log\mathcal{N}(\F,\|\cdot \|_{\infty},\eps)$.

\bibliographystyle{plainnat}

\end{document}